\newtheorem{definition}{Definition}[section]
\newtheorem{theorem}[definition]{Theorem}
\newtheorem{lemma}[definition]{Lemma}
\newtheorem{proposition}[definition]{Proposition}
\newtheorem{corollary}[definition]{Corollary}
\theoremstyle{definition}
\newtheorem{remark}[definition]{Remark}
\newtheorem{example}[definition]{Example}
\numberwithin{equation}{section}
\begin{document}

\begin{center}
{\Large\bf Subdominant pseudoultrametric on graphs}
\end{center}

\begin{center}
{\bf O. Dovgoshey and  E. Petrov}
\end{center}

\begin{abstract}
Let $(G,w)$ be a weighted graph. The  necessary and sufficient conditions under
 which a weight $w : E(G)\rightarrow \mathbb{R}^+$ can be extended to a pseudoultrametric on $V(G)$
  are found. A criterion of the uniqueness of this extension is also obtained. It is
   proved that $G$  is  complete k-partite with $k \geq 2$ if and only if, for every
   pseudoultrametrizable weight $w$, there exists the smallest pseudoultrametric agreed
    with $w$. We characterize the structure of graphs for which the subdominant  pseudoultrametric
     is an ultrametric for every strictly positive pseudoultrametrizable weight.
\end{abstract}

\bigskip
{\bf Key words:} weighted graph, infinite graph, ultrametric space, shortest path metric, complete k-partite graph.

\bigskip
{\bf 2010 AMS Classification:} 05C10, 05C12, 54E35.

\section{Introduction}
Throughout this paper, a \textit{graph} is a pair $(V,E)$ consisting of nonempty set $V$ and (probably empty) set $E$  elements of
which are unordered pairs of different points from $V$.
For the graph $G=(V,E)$, the set $V=V(G)$ and $E=E(G)$ are called
\textit{the set of vertices} and, respectively, \textit{the set of edges}.
Generally we shall follow terminology adopted in \cite{BM}.
Let us give some definitions.
If $V(H)\subseteq V(G)$ and $E(H)\subseteq E(G)$, then graph $H$ is a {\it subgraph} of graph $G$, $H\subseteq G$.
Recall that $G$ is called \textit{complete} if every two different vertices $u$, $v$ are \textit{adjacent}, $\{u,v\} \in E(G)$.
Graph $G$ is {\it finite} if $|V(G)|<\infty$.
If $E(G)=\varnothing$, then $G$ is  an \textit{empty graph}.
A finite nonempty graph $P\subseteq G$ is a \textit{path} (in $G$), if we can enumerate without repetition vertices from $P$  into a
 sequence $(v_1,v_2,...,v_n)$ such that
$$
(\{v_i,v_j\}\in E(P))\Leftrightarrow (|i-j|=1).
$$
We shall identify the path $P$ with the  sequence $(v_1,v_2,...,v_n)$ and shall say that $P$ connects $v_1$ and $v_n$.
A finite graph $C$ is a \textit{cycle} if $|V(C)|\geq 3$ and there exists an enumeration $(v_1,v_2,...,v_n)$
 of his vertices such that
\begin{equation*}
(\{v_i,v_j\}\in E(C))\Leftrightarrow (|i-j|=1\quad \mbox{or}\quad |i-j|=n-1).
\end{equation*}
Some two vertices in graph are \textit{connected} if there exists a path connecting them.
A graph is {\it connected} if every two his vertices are connected.
A graph $G=(V,E)$ together with function $w:E\rightarrow \mathbb{R}^+=[0,+\infty)$ is called a \textit{weighted} graph, and  $w$ is
called a \textit{weight} or a \textit{weighting function}.
The weighted graphs we shall denote by $(G,w)$.

Recall now some necessary definitions from the theory of metric spaces.
An \textit{ultrametric} on a set $X$ is a function $d:X\times X\rightarrow \mathbb{R}^+$ such that for all $x,y,z \in X$:
\begin{itemize}
\item [(i)] $d(x,y)=d(y,x)$,
\item [(ii)] $(d(x,y)=0)\Leftrightarrow (x=y)$,
\item [(iii)] $d(x,y)\leq \max \{d(x,z),d(z,y)\}$.
\end{itemize}
If (ii) is replaced by the weaker condition (ii') $d(x,x)=0$, then $d$ is  a {\it pseudoultrametric}.
Inequality (iii)  is often called the {\it strong triangle inequality}.
A function $d:X\times X\rightarrow \mathbb{R}^+$ satisfying the  ordinary triangle inequality and having properties (i)-(ii'), is called a
{\it pseudometric}.

If $(G,w)$ is a weighted graph and
\begin{equation}\label{eq1.2}
2\max\limits_{e\in E(C)} w(e) \leq \sum\limits_{e \in E(C)} w(e)
\end{equation}
for every cycle $C\subseteq G$, then there exists a pseudometric $d:V(G)\times V(G)\rightarrow \mathbb{R}^+$ such that
\begin{equation}\label{eq1.3}
w(\{x,y\})=d(x,y)
\end{equation}
for every $\{x,y\} \in E(G)$.
As an example of  such pseudometric, for connected $G$, we can take the  well known ``shortest path metric''.
This result was proved in~\cite{DMV} and the next question was formulated.
Under what conditions on $w$ there exists an ultrametric (pseudoultrametric) $d$ extending the weight $w$, in the sense
that~(\ref{eq1.3}) holds for all edges $\{x,y\}$ of  $G$?

Theorem~\ref{th2.4} below gives us a complete answer on this question.
The necessary and sufficient conditions of uniqueness of such extension are found in Theorem~\ref{th5.6}.
Moreover, for connected $G$, we find the ``greatest'' pseudoultrametric $d$, extending $w$, and we show, that this
pseudoultrametric is subdominant for the ``shortest path metric'' (see Theorem~\ref{th2.8} and corollary~\ref{cor2.7}).
The necessary and sufficient conditions under  which the subdominant pseudoultrametric is a metric are found in Theorem~\ref{th3.3}.
Using this theorem in Corollary~\ref{cor3.2} we find  the structural characteristic of  graphs $G$, for which there exists
$w:E(G)\rightarrow \mathbb{R}^+$ such that:
\begin{itemize}
\item [(i)] $w(e)>0$ for all $e \in E(G)$;
\item [(ii)] The set of pseudoultrametrics, extending $w$, is not empty, but does not contain any ultrametric.
\end{itemize}
Moreover, we give some results, showing that the subdominant pseudoultrametric and the shortest path metric ``behave similarly''.

\section{Subdominant pseudoultrametric}

In the next lemma and further we identify a pseudoultrametric space $(X,d)$ with the complete weighted graph $(G, w_d)$
 having $V(G)=X$ and satisfying the equality
\begin{equation}\label{eq2.4}
	w_d(\{x,y\})=d(x,y)
\end{equation}
for every pair of different points $x,y \in X$.
	
\begin{lemma}\label{lem2.2}
Let $(X,d)$ be a pseudoultrametric space. Then for every cycle $C\subseteq G(X)$ there exist at least two distinct edges
$e_1$, $e_2$ such that
\begin{equation}\label{eq2.5}
	w_d(e_1)=w_d(e_2)=\max\limits_{e\in E(C)}w_d(e).
\end{equation}
\end{lemma}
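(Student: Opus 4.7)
The plan is to argue by contradiction: assume the maximum weight on the cycle is attained by exactly one edge and derive a violation of the strong triangle inequality.

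Concretely, let $C$ be a cycle with vertex enumeration $(v_1,v_2,\ldots,v_n)$, so that the edges of $C$ are $\{v_i,v_{i+1}\}$ for $1 \leq i \leq n-1$ together with the closing edge $\{v_n,v_1\}$. Set $M=\max_{e\in E(C)} w_d(e)$ and suppose, for contradiction, that there is a unique edge $e_1\in E(C)$ with $w_d(e_1)=M$. After relabeling the vertices cyclically, I may assume $e_1=\{v_n,v_1\}$, so that $w_d(\{v_i,v_{i+1}\})<M$ for all $i=1,\ldots,n-1$.

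The core step is to push the strong triangle inequality along the path $(v_1,v_2,\ldots,v_n)$. Using (iii) repeatedly,
\begin{equation*}
d(v_1,v_n)\leq \max\{d(v_1,v_2),d(v_2,v_n)\}\leq \max\{d(v_1,v_2),d(v_2,v_3),d(v_3,v_n)\}\leq \cdots
\end{equation*}
A straightforward induction on $n$ yields $d(v_1,v_n)\leq \max_{1\leq i\leq n-1} d(v_i,v_{i+1})$. By (\ref{eq2.4}) this reads $w_d(e_1)\leq \max_{1\leq i\leq n-1} w_d(\{v_i,v_{i+1}\})<M=w_d(e_1)$, which is absurd. Hence the maximum must be attained on at least two distinct edges of $C$.

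I do not anticipate a real obstacle here; the only thing to be careful about is the inductive application of the strong triangle inequality along the path obtained by deleting $e_1$ from $C$, and the fact that (iii) applies to arbitrary triples in $X$ (not only to adjacent vertices of the cycle), which is exactly what makes the telescoping estimate work in a pseudoultrametric but not in a general pseudometric.
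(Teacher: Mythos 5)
Your proof is correct, but it follows a different route than the paper's. The paper argues by induction on the number of edges $q(C)$ of the cycle: assuming a unique edge $e_1=\{x,y\}$ of maximal weight, it picks the neighbour $z$ of $y$ in $C$ distinct from $x$, uses the isosceles property of pseudoultrametrics ($d(y,z)<d(x,y)$ forces $d(x,z)=d(x,y)$) to delete $y$ and contract to a shorter cycle $C_1$ which still has a unique edge of maximal weight, contradicting the induction hypothesis anchored at triangles. You instead prove directly the generalized strong triangle inequality $d(v_1,v_n)\leq \max_{1\leq i\leq n-1} d(v_i,v_{i+1})$ by telescoping (iii) along the path obtained by deleting the supposed unique maximal edge, and read off the contradiction $M<M$; your observation that (iii) may be applied to arbitrary triples of $X$, not only to cycle-adjacent vertices, is exactly the point that makes this legitimate, since $d$ is defined on all of $V(G(X))\times V(G(X))$. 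Both arguments are elementary inductions of comparable length; yours has the advantage of isolating the reusable chain inequality $d(x_1,x_n)\leq\max_i d(x_i,x_{i+1})$ as the single key estimate, while the paper's cycle-contraction argument showcases the isosceles property, which is the recurring mechanism behind ultrametric phenomena exploited elsewhere in the paper. No gap in your argument.
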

\begin{proof}
Let us denote by $q(C)$ the number of edges of a cycle $C$.
If $q(C)=3$, then ~(\ref{eq2.5}) follows from the strong triangle inequality.
Suppose that~(\ref{eq2.5}) holds when $q(C)\leq n$, but there exists a cycle $C$ with $q(C)=n+1$, having exactly one edge $e_1=\{x,y\}$
such that
	$$
	w_d(e_1)=\max\limits_{e\in C}w_d(e).
	$$
Let $z$ be a vertex of cycle $C$ adjacent to   $y$ and distinct  from $x$.
By the uniqueness of the edge of maximal weight we have
	$$
	d(y,z)<d(x,y).
	$$
This inequality and the strong triangle inequality imply  $d(x,z)=d(x,y)$.
Let $C_1$ be a cycle for which
	$$
	V(C_1)=V(C)\backslash\{y\}\quad \mbox{и} \quad E(C_1)=(E(C)\backslash \{\{x,y\},\{y,z\}\})\cup \{\{x,z\}\}.
	$$
Then $q(C_1)=n$ and $\{x,z\}$ is the unique edge of maximal weight, which contradicts the induction hypothesis.
	\end{proof}
\begin{remark}\label{rem2.2}
Probably, this lemma is known.
In any case, the presence in the graph of two edges of  maximum length is a commonly meeting phenomenon under the work with the ultrametrics
 and their generalizations.
For example, the  so called 2-ultrametric spaces are characterized by the fact that every their four-point subspace has at least
two edges with the length equals to the diameter of the subspace (see~\cite{JS}).
\end{remark}

We turn now to the definition of the subdominant pseudoultrametric.

On the set $\mathfrak{F}$ of  the pseudometrics defined on $X$ we introduce the  partial order $\preceq$ as
\begin{equation}\label{eq2.02}
(d_1\preceq d_2)  \Leftrightarrow (\forall  x,y \in X:d_1(x,y)\leq d_2(x,y)).
\end{equation}

In~\cite{BMM}, for given metric space $(X,d)$, the   subdominant ultrametric is defined as the  greatest element of the
 poset $(\mathfrak{F}_d,\preceq)$, where $\mathfrak{F}_d \subseteq \mathfrak{F}$ is the set of the ultrametrics $\delta$ such that
$$
\delta(x,y)\leq d(x,y)
$$
for $x,y\in X$.
We generalize this definition to the  weighted graphs.

\begin{definition}\label{def2.3*}
Let $(G,w)$ be a nonempty weighted graph  and $\mathfrak{F}_{w,u}$ be the family of the pseudoultrametrics $\rho$ such that
$$
\rho(u,v)\leq w(\{u,v\})
$$
for every edge $\{u,v\}\in E(G)$.
If the poset $(\mathfrak{F}_{w,u},\preceq)$ contains the greatest element, then we call this element the subdominant pseudoultrametric for
 $w$.
\end{definition}

Note that $\mathfrak{F}_{w,u}\neq \varnothing$ because the zero pseudoultrametric
$$
\rho(u,v)=0, \forall u,v \in V(G)
$$
belongs to $\mathfrak{F}_{w,u}$.

We turn now to the construction of subdominant pseudoultrametrics.

Let $u$, $v$ be two distinct vertices of a connected weighted graph $(G,w)$.
Denote by $\mathfrak{P}_{u,v}$ the  set of the paths connecting $u$ and $v$.
Define the function $\rho_w$ on the Cartesian square $V(G)\times V(G)$ by the rule
\begin{equation}\label{eq2.1}
\rho_{w}(x,y):=\left\{
\begin{array}{l}
0 \mbox{ if } x=y \\\
\inf\limits_{P\in \mathfrak{P}_{x,y}}(\max\limits_{e\in P}w(e)) \mbox{ if } x \neq y.\\
\end{array}
\right.
\end{equation}
\begin{theorem}\label{st2.1}
The function $\rho_{w}$ is the subdominant pseudoultrametric for every nonempty connected weighted graph $(G,w)$.
\end{theorem}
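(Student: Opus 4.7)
The plan is to verify in turn the three requirements built into Definition~\ref{def2.3*}: (a) $\rho_{w}$ is a pseudoultrametric on $V(G)$; (b) $\rho_{w}$ belongs to $\mathfrak{F}_{w,u}$, that is, $\rho_{w}(x,y)\le w(\{x,y\})$ whenever $\{x,y\}\in E(G)$; and (c) $\rho_{w}$ dominates every member of $\mathfrak{F}_{w,u}$.

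\textbf{Step (a).} Connectedness of $G$ guarantees $\mathfrak{P}_{x,y}\neq\varnothing$, so the infimum in (\ref{eq2.1}) is a well-defined nonnegative real. Symmetry is immediate because a path $P=(v_{1},\dots,v_{n})$ connecting $x$ and $y$ can be read in reverse, giving the same set of edges. The condition $\rho_{w}(x,x)=0$ holds by definition. For the strong triangle inequality, fix pairwise-distinct $x,y,z$ and $\varepsilon>0$, and choose $P_{1}\in\mathfrak{P}_{x,z}$, $P_{2}\in\mathfrak{P}_{z,y}$ with $\max_{e\in P_{1}}w(e)\le\rho_{w}(x,z)+\varepsilon$ and $\max_{e\in P_{2}}w(e)\le\rho_{w}(z,y)+\varepsilon$. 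The concatenation of $P_{1}$ and $P_{2}$ is a walk from $x$ to $y$; from its edge set one can extract a genuine path $P\in\mathfrak{P}_{x,y}$ (by deleting any cycles created when a vertex is revisited), and every edge of $P$ is already an edge of $P_{1}$ or $P_{2}$. Hence
\[
\rho_{w}(x,y)\le\max_{e\in P}w(e)\le\max\{\rho_{w}(x,z),\rho_{w}(z,y)\}+\varepsilon,
\]
and letting $\varepsilon\to 0$ gives (iii). The other cases (when some of $x,y,z$ coincide) are trivial.

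\textbf{Step (b).} If $\{x,y\}\in E(G)$, then the two-vertex path $(x,y)$ lies in $\mathfrak{P}_{x,y}$ and its only edge has weight $w(\{x,y\})$, so $\rho_{w}(x,y)\le w(\{x,y\})$ by the definition of infimum.

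\textbf{Step (c).} Let $\rho\in\mathfrak{F}_{w,u}$ and take distinct $x,y\in V(G)$. For every path $P=(v_{1},\dots,v_{n})\in\mathfrak{P}_{x,y}$ an easy induction on $n$, using the strong triangle inequality for $\rho$ at each step, yields
\[
\rho(x,y)=\rho(v_{1},v_{n})\le\max_{1\le i<n}\rho(v_{i},v_{i+1})\le\max_{1\le i<n}w(\{v_{i},v_{i+1}\})=\max_{e\in P}w(e),
\]
where the last inequality uses $\rho\in\mathfrak{F}_{w,u}$ on each edge. Taking the infimum over $P\in\mathfrak{P}_{x,y}$ gives $\rho(x,y)\le\rho_{w}(x,y)$, so $\rho\preceq\rho_{w}$.

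The only step that requires a little care is the strong triangle inequality in (a), because the concatenation of two paths is in general only a walk; the observation that one can still extract a path whose edges form a subset of the walk's edges is precisely what makes the argument work for the max-type aggregation in (\ref{eq2.1}). Nothing else is delicate, and together (a)--(c) identify $\rho_{w}$ as the greatest element of $(\mathfrak{F}_{w,u},\preceq)$, i.e.\ as the subdominant pseudoultrametric for $w$.
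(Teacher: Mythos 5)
Your proof is correct, and its first half (the strong triangle inequality via two near-optimal paths whose union contains a path from $x$ to $y$, plus the trivial membership $\rho_w\in\mathfrak{F}_{w,u}$ through the two-vertex path) is essentially the paper's argument. Where you genuinely diverge is the maximality step (c): the paper proceeds by contradiction, assuming $\rho(v_1,v_2)>\rho_w(v_1,v_2)$ for some $\rho\in\mathfrak{F}_{w,u}$, then closes a cheap path $P\in\mathfrak{P}_{v_1,v_2}$ into a cycle by adding the edge $\{v_1,v_2\}$ inside the complete weighted graph of the pseudoultrametric space $(V(P),\rho)$, and invokes Lemma~\ref{lem2.2} (every cycle in a pseudoultrametric space has at least two edges of maximal weight) to get a contradiction. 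You instead prove the positive statement directly: an induction along the path using the strong triangle inequality gives $\rho(v_1,v_n)\le\max_i\rho(v_i,v_{i+1})\le\max_{e\in P}w(e)$, and taking the infimum over paths yields $\rho\preceq\rho_w$. Your route is more elementary and self-contained, since it bypasses Lemma~\ref{lem2.2} entirely (the chain inequality you use is the standard extension of the ultrametric inequality along finite chains); the paper's route reuses Lemma~\ref{lem2.2}, which it needs anyway for Theorem~\ref{th2.4} and later results, so the authors get the maximality almost for free once that lemma is in place. Both arguments are complete; your walk-to-path extraction in step (a) is the same observation the paper packages as "take a path in the connected subgraph $G_1=P_1\cup P_2$."
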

\begin{proof}
Let us verify the strong triangle inequality
\begin{equation}\label{eq2.2}
\rho_{w}(u,v)\leq \max\{\rho_{w}(u,p),\rho_{w}(p,v)\}
\end{equation}
for different vertices $u,v,p \in V(G)$.
Let $\varepsilon$ be an  arbitrary positive number.
There exist some paths $P_1 \in \mathfrak{P}_{u,p}$ and $P_2 \in\mathfrak{P}_{p,v}$ such that
\begin{equation}\label{eq2.3}
\rho_{w}(u,p)+\varepsilon \geq \max\limits_{e\in P_1}w(e)\quad \mbox{и} \quad \rho_{w}(p,v)+\varepsilon \geq \max\limits_{e\in P_2}w(e).
\end{equation}
The subgraph $G_1 $ of $G$ with $V(G_1)=V(P_1)\cup V(P_2)$ and $E(G_1)=E(P_1)\cup E(P_2)$ is connected.
Let $P_3$ be a path in $G_1$, connecting $u$ and $v$.
Then using~(\ref{eq2.3}) we find
$$
\rho_{w}(u,v) \leq \max\limits_{e\in P_3}w(e) \leq (\max\limits_{e\in P_1}w(e)) \vee (\max\limits_{e\in P_2}w(e))\leq \max \{\rho_w(u,p)+\varepsilon,\rho_w(p,v)+\varepsilon\}.
$$
Hence,  letting $\varepsilon$ to zero, we obtain~(\ref{eq2.2}).

It remains to verify that $\rho_w$ is subdominant.
Suppose there exist $\rho \in \mathfrak{F}_{w,u}$ and $v_1, v_2 \in V(G)$ such that
\begin{equation}\label{eq2.9}
\rho(v_1,v_2)>\rho_w(v_1,v_2).
\end{equation}
This inequality and~(\ref{eq2.1}) imply the  existence of a path  $P\in \mathfrak{P}_{v_1,v_2}$ for which
$$
\rho(v_1,v_2)>\max\limits_{e\in P} w(e).
$$
Note that $\rho(u,v)\leq w(\{u,v\})$ for every $\{u,v\}\in E(G)$.
Consequently, the path $P$ does not contain $\{v_1,v_2\}$.
Consider, in the pseudoultrametric space $(V(P),\rho)$,  a cycle  $C$ with
$$
V(C)=V(P), \quad E(C)=E(P)\cup\{\{v_1,v_2\}\}.
$$
Then $\{v_1,v_2\}$ is the unique edge of  $C$ on which $\max\{\rho(x,y):\{x,y\}\in E(C)\}$ is achieved, contrary
to  Lemma~\ref{lem2.2}.
Thus, for every  $v_1,v_2 \in V(G)$ and  $\rho \in \mathfrak{F}_{w,u}$ the  inequality $\rho(v_1,v_2)\leq\rho_w(v_1,v_2)$ holds,
i.e. $\rho_w$ is the greatest element of  $(\mathfrak{F}_{w,u},\preceq)$.
\end{proof}

\begin{remark}\label{rem2.55}
If $G$ is a finite graph and a weight $w$ is defined by some metric as in~(\ref{eq1.3}), then the subdominant pseudoultrametric
$\rho_w$ is an ultrametric.
For the complete $G$ this classic case was considered in~\cite{JJS}.
An efficient procedure  for the evaluation of the subdominant ultrametric on the finite metric spaces  can be found in ~\cite{RAD}
and~\cite{RTV}.
\end{remark}

We now turn to the study of connections between the  shortest-path pseudometric and the subdominant pseudoultrametric.
Remind that the shortest-path pseudometric is a pseudometric defined on $V(G)$ as
\begin{equation}\label{eq2.10*}
d_{w}(x,y)=\left\{
\begin{array}{l}
0, \mbox{ if } x=y \\\
\inf\limits_{P\in \mathfrak{P}_{x,y}}\sum\limits_{e\in E(P)}w(e), \mbox{ if } x \neq y.\\
\end{array}
\right.
\end{equation}

Similarly to Definition~\ref{def2.3*} we introduce
\begin{definition}\label{def2.5}
Let  $(G,w)$ be  a nonempty weighted graph and $\mathfrak{F}_{w,m}$ be the set of the pseudometrics $d$ such that
\begin{equation}\label{eq2.11*}
d(u,v)\leq w(\{u,v\})
\end{equation}
for all edges $\{u,v\}\in E(G)$.
The greatest element of the poset $(\mathfrak{F}_{w,m}\preceq)$ is called, if it exists, the subdominant pseudometric (for the weight $w$).
\end{definition}

\begin{proposition}\label{prop2.6}
Let $(G,w)$ be a nonempty connected weighted graph.
Then $d_w$ is the subdominant pseudometric for the weight $w$.
\end{proposition}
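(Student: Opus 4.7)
The plan is to mimic closely the structure of the proof of Theorem~\ref{st2.1}, replacing the maximum by the sum and the strong triangle inequality by the ordinary one. The argument splits into three steps: showing that $d_w$ is a pseudometric, showing that $d_w \in \mathfrak{F}_{w,m}$, and showing it dominates every other member of $\mathfrak{F}_{w,m}$.

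First I would check that $d_w$ is a pseudometric. Symmetry and $d_w(x,x)=0$ are immediate from~(\ref{eq2.10*}) (a path $(v_1,\ldots,v_n)$ reversed is again a path with the same edge set). For the triangle inequality on distinct vertices $u,p,v$, I would fix $\varepsilon>0$ and choose $P_1\in\mathfrak{P}_{u,p}$, $P_2\in\mathfrak{P}_{p,v}$ with edge sums at most $d_w(u,p)+\varepsilon$ and $d_w(p,v)+\varepsilon$, respectively. As in the proof of Theorem~\ref{st2.1}, the subgraph $G_1$ with $V(G_1)=V(P_1)\cup V(P_2)$ and $E(G_1)=E(P_1)\cup E(P_2)$ is connected, so it contains a path $P_3\in\mathfrak{P}_{u,v}$. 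Since the vertices of $P_3$ are distinct, each edge of $G_1$ appears in $E(P_3)$ at most once, hence $\sum_{e\in E(P_3)}w(e)\le \sum_{e\in E(P_1)}w(e)+\sum_{e\in E(P_2)}w(e)$. Letting $\varepsilon\to 0$ yields $d_w(u,v)\le d_w(u,p)+d_w(p,v)$.

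Next, for each edge $\{u,v\}\in E(G)$ the single-edge path $(u,v)$ belongs to $\mathfrak{P}_{u,v}$, so by~(\ref{eq2.10*}) we have $d_w(u,v)\le w(\{u,v\})$, giving $d_w\in\mathfrak{F}_{w,m}$.

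Finally I would verify dominance. Let $d\in\mathfrak{F}_{w,m}$ and $x,y\in V(G)$, $x\ne y$. For an arbitrary $P=(v_1,\ldots,v_n)\in\mathfrak{P}_{x,y}$, repeated application of the ordinary triangle inequality for $d$ together with~(\ref{eq2.11*}) gives
\[
d(x,y)\le\sum_{i=1}^{n-1}d(v_i,v_{i+1})\le\sum_{i=1}^{n-1}w(\{v_i,v_{i+1}\})=\sum_{e\in E(P)}w(e).
\]
Taking the infimum over $P\in\mathfrak{P}_{x,y}$ we obtain $d(x,y)\le d_w(x,y)$, so $d\preceq d_w$. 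This is the whole argument; there is no real obstacle, since—unlike the pseudoultrametric case, which needed Lemma~\ref{lem2.2} to rule out a unique maximal edge in an auxiliary cycle—here the ordinary triangle inequality extends by induction along a path without any combinatorial detour. The only subtlety worth flagging is the step in which a path $P_3$ is extracted from $G_1$: one must note that the vertices of $P_3$ being pairwise distinct forces each edge of $G_1$ to contribute at most once to the sum over $E(P_3)$, which is exactly what keeps the additive bound intact.
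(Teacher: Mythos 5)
Your argument is correct and follows essentially the same route as the paper: membership in $\mathfrak{F}_{w,m}$ via the single-edge path, and dominance via the triangle inequality applied along an arbitrary path together with~(\ref{eq2.11*}) (the paper phrases this last step as a proof by contradiction, which is the same computation). Your additional verification that $d_w$ satisfies the pseudometric axioms is fine but not part of the paper's proof, since the paper simply recalls $d_w$ as the already-known shortest-path pseudometric.
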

\begin{proof}
The inequality $d_w(u,v)\leqslant w(\{u,v\})$ holds for every $\{u,v\}\in E(G)$.
This follows from ~(\ref{eq2.10*}) and the fact that the two-term sequence $u,v$ is a path belonging to $\mathfrak{P}_{u,v}$.
Consequently, $d_w \in \mathfrak{F}_{w,m}$.
Suppose that there are $d \in \mathfrak{F}_{w,m}$ and $u,p \in V(G)$ for which
$d(u,p)>d_w(u,p)$.
Then there exists a path $(u=x_1,...,x_n=p) \in \mathfrak{P}_{u,p}$ such that
\begin{equation}\label{eq2.12*}
d(u,p)>\sum\limits_{i=1}^{n-1}w(\{x_i,x_{i+1}\}).
\end{equation}
Since $d\in \mathfrak{F}_{w,m}$, the inequality $w(\{x_i,x_{i+1}\})\geq d(x_i,x_{i+1})$ holds for $i=1,...,n-1$.
From here and~(\ref{eq2.12*}) we find
$$
d(x_1,x_n)\geq \sum\limits_{i=1}^{n-1}d(x_i,x_{i+1}),
$$
contrary to the triangle inequality.
\end{proof}

\begin{corollary}\label{cor2.7}
Let $(G,w)$ be a nonempty connected weighted graph.
Then the pseudoultrametric  $\rho_w$ constructed by  rule~(\ref{eq2.1}) is the subdominant pseudoultrametric for $d_w$, i.e.,
$\rho_w \preceq d_w \mbox{ and } \rho \preceq \rho_w$ for every pseudoultrametric $\rho$ satisfying $\rho \preceq d_w$.
\end{corollary}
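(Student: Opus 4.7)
The plan is to verify the two inequalities stated in the corollary separately, each essentially by reducing to a result already in hand.

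For the first claim, $\rho_w \preceq d_w$, I would argue directly from the defining formulas~(\ref{eq2.1}) and~(\ref{eq2.10*}). Since $w$ takes values in $\mathbb{R}^+$, every path $P \in \mathfrak{P}_{x,y}$ between distinct vertices satisfies $\max_{e \in E(P)} w(e) \leq \sum_{e \in E(P)} w(e)$. Taking the infimum over $P \in \mathfrak{P}_{x,y}$ on both sides gives $\rho_w(x,y) \leq d_w(x,y)$, with the case $x = y$ trivial. This is the only genuine computation, and it is completely routine.

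For the second claim, I would fix a pseudoultrametric $\rho$ on $V(G)$ with $\rho \preceq d_w$ and show that $\rho$ lies in the family $\mathfrak{F}_{w,u}$ of Definition~\ref{def2.3*}. Indeed, for an arbitrary edge $\{u,v\} \in E(G)$, the proof of Proposition~\ref{prop2.6} already observed that $d_w(u,v) \leq w(\{u,v\})$, so
\[
\rho(u,v) \leq d_w(u,v) \leq w(\{u,v\}),
\]
which is precisely the membership condition for $\mathfrak{F}_{w,u}$. Theorem~\ref{st2.1} then applies: $\rho_w$ is the greatest element of $(\mathfrak{F}_{w,u},\preceq)$, hence $\rho \preceq \rho_w$.

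Since $\rho_w$ itself is a pseudoultrametric (again by Theorem~\ref{st2.1}) and satisfies $\rho_w \preceq d_w$ by the first step, these two steps together show that $\rho_w$ is the greatest pseudoultrametric dominated by $d_w$, i.e., the subdominant pseudoultrametric for $d_w$. I do not anticipate any real obstacle here; the content of the corollary is essentially a repackaging of Theorem~\ref{st2.1} and Proposition~\ref{prop2.6}, and no new constructions or estimates are needed.
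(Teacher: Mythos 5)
Your proposal is correct and follows essentially the same route as the paper: the inequality $\rho_w \preceq d_w$ is read off by comparing the defining formulas~(\ref{eq2.1}) and~(\ref{eq2.10*}), and the maximality of $\rho_w$ comes from observing that any pseudoultrametric $\rho \preceq d_w$ satisfies $\rho(u,v)\leq d_w(u,v)\leq w(\{u,v\})$ on edges, hence lies in $\mathfrak{F}_{w,u}$, so Theorem~\ref{st2.1} gives $\rho \preceq \rho_w$. The only cosmetic difference is that the paper names the subdominant pseudoultrametric $\rho_w^*$ for $d_w$ and concludes $\rho_w^*=\rho_w$, while you verify the two defining properties directly; the substance is identical.
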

\begin{proof}
Denote by $\rho_w^*$ the subdominant pseudoultrametric for $d_w$.
From~(\ref{eq2.4}), ~(\ref{eq2.10*}) and~(\ref{eq2.1})  it follows that $\rho_w\preceq d_w$, consequently $\rho_w\preceq\rho_w^*$.
The inverse relation $\rho_w^*\preceq \rho_w$ follows from the fact that every pseudoultrametric $\rho$ satisfying
 $\rho \preceq d_w$ belongs to the set  $\mathfrak{F}_{w,u}$ (see Definition~\ref{def2.3*}).
Thus, $\rho_w^*= \rho_w$.
\end{proof}
\begin{remark}\label{rem2.8}
The question when  $d_w$ and $\rho_w$ are  metrics  is of interest in its own right.
We return to this in Section 4.
Note that the problem of finding a criterion of existence of the subdominant \textbf{ultrametric}, for a given metric,  was posed
in \cite{M}.
\end{remark}

\section{Pseudoultrametrization of weighted graphs}

\begin{definition}\label{def3.1}
Let $(G,w)$ be a weighted graph and let $d:V(G)\times V(G)\rightarrow \mathbb{R}^+$ be an ultrametric (pseudoultrametric)
We shall say that $d$ extends $w$ if~(\ref{eq1.3}) holds for every  $\{x,y\} \in E(G)$.
\end{definition}

\begin{definition}\label{def3.2}
The weight $w$ is \textit{ultrametrizable} (\textit{pseudoultrametrizable}) if there exists an ultrametric (pseudoultrametric)
extending $w$.
\end{definition}

The following theorem gives us a criterion of pseudoultrametrizability  for a given weight $w$.

\begin{theorem}\label{th2.4}
Let $(G,w)$ be a nonempty weighted graph. The weight $w$ is  pseudoultrametrizable if and only if,  for each cycle $C\subseteq G$,
there exist at least two distinct edges $e_1, e_2\in E(C)$ such that
\begin{equation}\label{eq2.6}
w(e_1)=w(e_2)=\max\limits_{e\in E(C)} w(e).
\end{equation}
If $G$ is a connected graph and $w$ is a pseudoultrametrizable weight, then the subdominant pseudoultrametric extends $w$.
\end{theorem}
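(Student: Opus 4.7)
The plan is to treat the two directions of the biconditional separately, then let the last sentence fall out of the sufficiency argument.

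\textbf{Necessity.} Suppose $d$ is a pseudoultrametric on $V(G)$ extending $w$. Fix any cycle $C\subseteq G$. Via the identification~(\ref{eq2.4}), the cycle $C$ is also a cycle inside the complete weighted graph $(G(V(G)),w_d)$, and $w_d(e)=w(e)$ for every $e\in E(C)$ because $d$ extends $w$. Applying Lemma~\ref{lem2.2} to $C$ produces two distinct edges of maximum $w_d$-weight, which are then two distinct edges of maximum $w$-weight, giving~(\ref{eq2.6}).

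\textbf{Sufficiency, connected case.} Assume $G$ is connected and that~(\ref{eq2.6}) holds for every cycle $C\subseteq G$. Define $\rho_w$ by~(\ref{eq2.1}); Theorem~\ref{st2.1} tells me $\rho_w$ is a pseudoultrametric on $V(G)$, and it remains to verify $\rho_w(x,y)=w(\{x,y\})$ for every edge $\{x,y\}\in E(G)$. The inequality $\rho_w(x,y)\leq w(\{x,y\})$ is immediate from~(\ref{eq2.1}) upon taking the two-term path $(x,y)$. For the reverse inequality I would argue by contradiction: if some edge $\{x,y\}$ satisfies $\rho_w(x,y)<w(\{x,y\})$, then~(\ref{eq2.1}) yields a path $P=(v_1,\dots,v_n)\in\mathfrak{P}_{x,y}$ with $\max_{e\in E(P)}w(e)<w(\{x,y\})$, which forces $n\geq 3$. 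Then the graph $C$ with $V(C)=V(P)$ and $E(C)=E(P)\cup\{\{x,y\}\}$ is a cycle in $G$ on which $\{x,y\}$ is the \emph{unique} edge of maximum weight, contradicting the hypothesis on cycles. This single argument simultaneously proves the last sentence of the theorem, since the pseudoultrametric produced is exactly $\rho_w$.

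\textbf{Sufficiency, general case.} For possibly disconnected $G$, apply the connected case to each connected component $G_\alpha$ separately to obtain a pseudoultrametric $\rho_\alpha$ on $V(G_\alpha)$ extending $w|_{E(G_\alpha)}$. Glue these into a global pseudoultrametric $d$ on $V(G)$ by setting $d|_{V(G_\alpha)\times V(G_\alpha)}=\rho_\alpha$ and prescribing suitable cross-component values. When the weights are bounded by some $M<\infty$ one may simply put $d(x,y)=M$ whenever $x,y$ lie in distinct components; in general one chooses the cross-component constants so that the induced function on the index set of components itself satisfies the strong triangle inequality and dominates all intra-component values.

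\textbf{Main obstacle.} The heart of the proof is the contradiction step in the connected sufficiency, namely exhibiting a cycle on which $\{x,y\}$ becomes a strict maximum. This requires nothing more than the observation that $P$ together with the extra edge $\{x,y\}$ really is a cycle, which hinges on $n\geq 3$. The gluing in the disconnected case is a routine but slightly fussy technical matter rather than a conceptual difficulty.
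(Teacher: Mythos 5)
Your necessity argument and your connected-case sufficiency argument coincide with the paper's: necessity is Lemma~\ref{lem2.2} applied through the identification~(\ref{eq2.4}), and for connected $G$ one checks that the subdominant pseudoultrametric $\rho_w$ of Theorem~\ref{st2.1} satisfies $\rho_w(x,y)=w(\{x,y\})$ on every edge by closing a cheaper path $P\in\mathfrak{P}_{x,y}$ with the edge $\{x,y\}$ into a cycle whose unique maximal edge is $\{x,y\}$, contradicting~(\ref{eq2.6}); as you say, this simultaneously gives the last sentence of the theorem.

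The divergence is in the disconnected case, and there your recipe as written has a gap. You propose to solve each component and glue, choosing cross-component \emph{constants} that ``dominate all intra-component values''. But the weight, and hence the component pseudoultrametric $\rho_\alpha$, need not be bounded on a component (take an infinite path with weights $1,2,3,\dots$), and then no finite constant dominates the intra-component values, so the prescription produces nothing; the bounded-$M$ trick does not extend in the way you indicate. The gap is easy to repair: fix a base point $p_\alpha$ in each component and set $d(x,z)=\max\{\rho_\alpha(x,p_\alpha),\,c,\,\rho_\beta(z,p_\beta)\}$ for $x\in V(G_\alpha)$, $z\in V(G_\beta)$, $\alpha\neq\beta$, with any fixed $c>0$; the strong triangle inequality is then a routine case check and no boundedness is needed. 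The paper avoids gluing altogether: it picks one vertex $v_i$ in each component, adds the star of edges $\{v_i,v_{i_0}\}$ with arbitrary nonnegative weights to get a connected graph $\tilde G$, shows that no new cycles are created (a cycle using two added edges would force a path in $G$ joining distinct components), and then simply applies the already-proved connected case to $(\tilde G,\tilde w)$. Either repair makes your proof complete; as submitted, the general disconnected step would fail for unbounded weights.
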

\begin{proof}
Lemma~\ref{lem2.2} implies that~(\ref{eq2.6}) holds for each cycle  $C\subseteq G$ if $w$ is a pseudoultrametrizable weight.

Conversely, suppose that for each cycle  $C \subseteq G$ there exist $e_1, e_2 \in E(C)$ such that~(\ref{eq2.6}) holds and prove
the pseudoultrametrizability of  $w$.
Consider first the case, where $G$ is connected.
By Theorem~\ref{st2.1} it is sufficient to prove, for every edge $\{u,v\}\in E(G)$, the following equality
\begin{equation}\label{eq2.7}
\rho_w(u,v)=w(\{u,v\}),
\end{equation}
where $\rho_w$ is the subdominant pseudoultrametric for  $w$.
By Definition~\ref{def2.3*}, we have
$$
\rho_w(u,v)\leq w(\{u,v\}).
$$
If this inequality is strict, then there exists a path $P\in \mathfrak{P}_{u,v}$ for which
$$
\max\limits_{e\in P}w(e)<w(\{u,v\}).
$$
The last inequality implies that $\{u,v\}\notin E(P)$.
Since $\{u,v\}\in E(G)$, there exists a cycle $C$ with
$$
V(C)=V(P), \quad E(C)=E(P)\cup \{\{u,v\}\}.
$$
If $e_i$ is an edge of $C$ different form $\{u,v\}$, then $e_i\in E(P)$, so that
$$
w(e_i)\leq\max\limits_{e\in P}w(e)<w(\{u,v\})=\max\limits_{e\in C}w(e),
$$
contrary to~(\ref{eq2.6}).
The pseudoultrametrizability of $w$ is proved for the connected $G$.
\begin{figure}[ht]
\centering
\includegraphics[width=0.8\linewidth]{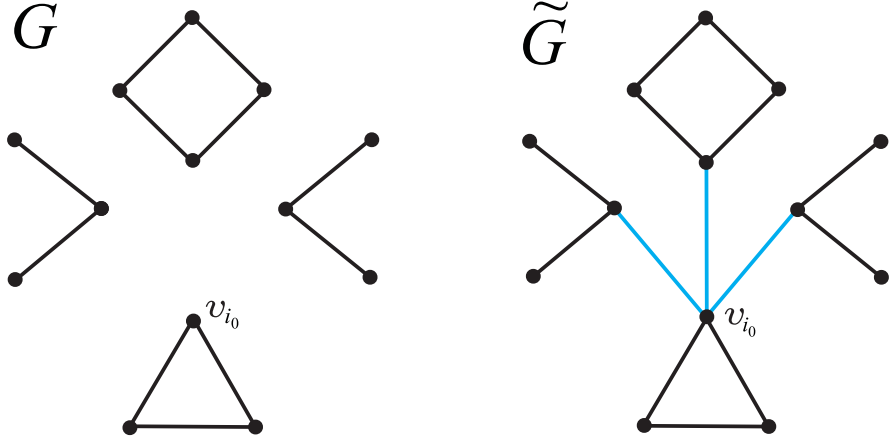}
\caption{The transition from a disconnected graph $G$ to a connected $\tilde{G}$.}
\label{ris1}
\end{figure}

Let $G$ now be disconnected.
Consider the set $\{G_i:i\in \mathfrak{I}\}$ of connected components of $G$, where $\mathfrak{I}$ is an indexing set.
For each $i \in \mathfrak{I}$ choose $v_i \in V(G_i)$ and fix an index $i_0 \in \mathfrak{I}$.
Consider a new graph $\tilde{G}$ with
$$
V(\tilde{G})=V(G), \quad E(\tilde{G})=E(G)\cup\{\{v_i,v_{i_0}\}:i \in \mathfrak{I}\backslash \{i_0\}\}.
$$
Let us extend $w$ to a function $\tilde{w}:E(\tilde{G})\rightarrow \mathbb{R}^+$ as
\begin{equation}\label{eq2.8}
\tilde{w}(e)=\left\{
\begin{array}{l}
w(e) \mbox{ if } e \in E(G) \\\
c_i \mbox{ if } e=\{v_i,v_{i_0}\}, i \in \mathfrak{I}\backslash\{i_0\}\\
\end{array}
\right.
\end{equation}
where $c_i$ are arbitrary non-negative constants.
Since $\tilde{G}$ is a connected graph, to prove the pseudoultrametrizability of $G$ it is sufficient to 
establish ~(\ref{eq2.6}) with $w=\tilde{w}$ for every cycle $C \subseteq \tilde{G}$.
Observe that each cycle $C\subseteq \tilde{G}$ is a cycle in $G$.
It is easily seen by drawing an appropriate picture (see Fig.~\ref{ris1}).
The formal proof is as follows.
Let $C \subseteq \tilde{G}$ but $C \nsubseteq G$.
Then there exists an edge in $C$ of the form $\{v_{i_0},v_{i_1}\}$, $i_1 \in \mathfrak{I}\backslash \{i_0\}$ and there exists a
unique edge incident to $v_{i_0}$ in $C$ and different from $\{v_{i_0},v_{i_1}\}$.
The definition of  $\tilde{G}$ implies that this edge has the form $\{v_{i_0},v_{i_2}\}$.
Removing, from the cycle $C$, the vertex $v_{i_0}$ we get the path $P$,
$$
V(P)=V(C)\backslash \{v_{i_0}\},\quad E(P)=E(C)\backslash \{\{v_{i_0},v_{i_1}\},\{v_{i_0},v_{i_2}\}\},
$$
connecting $v_{i_1}$ and $v_{i_2}$.
Since $E(P)\subseteq E(G)$, $v_{i_1}$ and $v_{i_2}$ lie in the same connected component, which contradicts to their definition.
Consequently if $C\subseteq \tilde{G}$, then $C \subseteq G$, so that~(\ref{eq2.6}) follows.
\end{proof}
\begin{remark}\label{rem2.5}
Condition~(\ref{eq2.6}) is  equivalent to the strong triangle inequality if the cycle $C$ contains exactly three vertices.
Note also that
$$
\sum\limits_{e \in E(C)}w(e)\geq w(e_1)+w(e_2)=2\max\limits_{e \in E(C)}w(e)
$$
if~(\ref{eq2.6}) holds.
Thus, the condition of the pseudoultrametrizability of the weight implies the condition of its pseudometrizability, as expected.
\end{remark}

\begin{remark}\label{rem3.5}
Directly from  Theorem~\ref{th2.4} it  follows  that the pseudoultrametrizability of the weight is a local property, i.e., if
 for every \textbf{finite} subgraph  $H$ of a weighted graph $(G,w)$ the restriction of $w$ on $E(H)$ is  pseudoultrametrizable, 
 then the weight $w$ is pseudoultrametrizable too.
\end{remark}

Recall that a graph which does not contain any cycles is called a \textit{forest} and  a \textit{tree} is a
 connected forest.

\begin{corollary}\label{cor2.6}
Let $G$ be a graph with $V(G)\neq \varnothing$.
$G$ is a forest if and only if every weight $w:E(G)\rightarrow \mathbb{R}^+$ is pseudoultrametrizable.
\end{corollary}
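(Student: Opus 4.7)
The plan is to invoke Theorem~\ref{th2.4} and reduce the statement to a purely combinatorial dichotomy: forests are exactly the graphs containing no cycles, and the cycle condition (\ref{eq2.6}) is the only obstruction to pseudoultrametrizability.

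For the forward implication, assume $G$ is a forest. Then $G$ has no cycles, so the condition of Theorem~\ref{th2.4} requiring two edges of maximum weight in each cycle $C \subseteq G$ is vacuously satisfied for any weight $w:E(G)\rightarrow\mathbb{R}^+$. Therefore every $w$ is pseudoultrametrizable.

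For the converse I would argue by contrapositive. Assume $G$ is not a forest; then $G$ contains at least one cycle $C$. Pick a distinguished edge $e_0 \in E(C)$ and define $w:E(G)\rightarrow\mathbb{R}^+$ by
\[
w(e)=\begin{cases} 2 & \text{if } e=e_0,\\ 1 & \text{if } e\in E(C)\setminus\{e_0\},\\ 0 & \text{if } e\in E(G)\setminus E(C).\end{cases}
\]
Then $e_0$ is the unique edge of $C$ on which $\max_{e\in E(C)} w(e)=2$ is attained, so condition (\ref{eq2.6}) fails for the cycle $C$. By Theorem~\ref{th2.4}, this particular weight $w$ is not pseudoultrametrizable, contradicting the assumption that every weight on $G$ is pseudoultrametrizable.

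I do not anticipate any real obstacle: the nontrivial content has already been packaged into Theorem~\ref{th2.4}, and what remains is only the elementary observation that a non-forest contains a cycle on which one can place a unique maximum weight. The only minor point to be careful about is to confirm that the concrete assignment above is legitimately a weight into $\mathbb{R}^+=[0,+\infty)$, which it is.
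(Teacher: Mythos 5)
Your proposal is correct and follows the paper's own route: both directions are reduced to Theorem~\ref{th2.4}, with the forward implication holding vacuously for forests and the converse obtained by placing a unique maximal weight on an edge of some cycle so that condition~(\ref{eq2.6}) fails. Your explicit weight (value $2$ on one edge of the cycle, $1$ on its other edges, $0$ elsewhere) is exactly the kind of construction the paper leaves implicit, and it is a legitimate weight into $[0,+\infty)$.
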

\bigskip
\noindent For the proof it suffices
to note that the existence of a cycle $C\subseteq G$ implies the existence of a weight $w:E(G)\rightarrow \mathbb{R}^+$ for which
 condition ~(\ref{eq2.6}) does not hold.

Let $(G,w)$ be a weighted graph with a pseudoultrametrizable weight $w$.
Denote by $\mathfrak{U}_w$ the set of all pseudoultrametrics on  $V(G)$ extending $w$.

\begin{theorem}\label{th2.8}
If $G$ is connected, then the subdominant pseudoultrametric $\rho_w$ is the greatest element of the poset
$(\mathfrak{U}_w,\preceq)$.
Conversely, if  $(\mathfrak{U}_w,\preceq)$ has the greatest element, then  $G$ is connected.
\end{theorem}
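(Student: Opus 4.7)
The plan is to prove the two implications separately. The forward direction is short: by Theorem~\ref{th2.4}, $\rho_w$ belongs to $\mathfrak{U}_w$, and any $\rho \in \mathfrak{U}_w$ satisfies $\rho(u,v) = w(\{u,v\})$ on edges, so in particular $\rho \in \mathfrak{F}_{w,u}$; Theorem~\ref{st2.1} then yields $\rho \preceq \rho_w$, so $\rho_w$ is the greatest element of $\mathfrak{U}_w$.

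For the converse I would argue by contrapositive: assuming $G$ is disconnected, I produce, for any $\rho^* \in \mathfrak{U}_w$, an element $\rho \in \mathfrak{U}_w$ with $\rho \not\preceq \rho^*$ (if $\mathfrak{U}_w = \varnothing$ there is nothing to prove). Fix two connected components $G_1, G_2$ of $G$, vertices $u \in V(G_1)$, $v \in V(G_2)$, and set $M := \rho^*(u,v)$. Apply the construction from the proof of Theorem~\ref{th2.4} to build a connected graph $\tilde{G}$ on $V(G)$ by adjoining, for each $i \neq i_0$, the edge $\{v_i, v_{i_0}\}$, and extend $w$ to $\tilde{w}$ on $\tilde{G}$ by giving every new edge the common weight $c := M + 1$. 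That proof already verifies that every cycle of $\tilde{G}$ is a cycle of $G$, so $\tilde{w}$ satisfies condition~(\ref{eq2.6}), and hence $\tilde{w}$ is pseudoultrametrizable. By the forward direction applied to the connected graph $\tilde{G}$, its subdominant pseudoultrametric $\rho_{\tilde{w}}$ extends $\tilde{w}$, and therefore extends $w$, so $\rho_{\tilde{w}} \in \mathfrak{U}_w$.

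The remaining step --- and, I expect, the main (if mild) obstacle --- is to show that $\rho_{\tilde{w}}(u,v) > M$. Every path in $\tilde{G}$ connecting $u$ and $v$ must traverse at least one new edge, because deleting the new edges from $\tilde{G}$ returns $G$, in which $u$ and $v$ lie in different connected components. Each new edge carries weight $c = M + 1$, so the maximum weight on any such path is at least $M + 1$; by formula~(\ref{eq2.1}) we obtain $\rho_{\tilde{w}}(u,v) \geq M + 1 > \rho^*(u,v)$, whence $\rho_{\tilde{w}} \not\preceq \rho^*$, completing the argument.
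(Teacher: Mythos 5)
Your proof is correct and takes essentially the same route as the paper: the forward direction via Theorem~\ref{th2.4} together with Theorem~\ref{st2.1}, and the converse via the $(\tilde{G},\tilde{w})$ construction from the proof of Theorem~\ref{th2.4}, exploiting the freedom in the weights assigned to the added edges. The only (harmless) difference is that the paper argues $\sup_{\rho\in\mathfrak{U}_w}\rho(v_{i_0},v_{i_1})=+\infty$ by letting the constants $c_i$ be arbitrary, while you fix a candidate $\rho^*$ and defeat it with the single choice $c=M+1$, thereby also spelling out the detail (that $\mathfrak{U}_{\tilde{w}}\neq\varnothing$, witnessed by $\rho_{\tilde{w}}$) which the paper leaves implicit.
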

\begin{proof}
Let $\mathfrak{F}_{w,u}$ be the set from Definition~\ref{def2.3*}.
Suppose $G$ is a connected graph.
Then $\mathfrak{F}_{w,u} \supseteq \mathfrak{U}_w$ and the subdominant pseudoultrametric $\rho_w$ belongs to  $\mathfrak{F}_{w,u}$.
By the definition of the subdominant pseudoultrametric we have $\rho\leqslant\rho_w$ for every $\rho \in \mathfrak{U}_w$.
To prove that $\rho_w$ is the greatest element of $(\mathfrak{U}_w,\preceq)$ it is sufficient to verify the relation
$\rho_w \in \mathfrak{U}_w$ that has already been established in Theorem~\ref{th2.4}.

Suppose $G$ is not connected.
Fix some points $v_{i_0}$ and $v_{i_1}$ belonging to distinct connected components.
Let us consider the weighted graph $(\tilde{G},\tilde{w})$ as it was done in the proof of Theorem~\ref{th2.4}.
It is clear that $\mathfrak{U}_w\supseteq \mathfrak{U}_{\tilde{w}}$.
The last inqlusion and the arbitrariness of constants $c_i$ in~(\ref{eq2.8}) imply  the equality
$$
\sup\limits_{\rho \in \mathfrak{U}_w}\rho(v_{i_0},v_{i_1})=+\infty.
$$
Consequently, the poset $(\mathfrak{U}_w,\preceq)$ does not contain the greatest element for the disconnected graphs.
\end{proof}

Using the last theorem we can easily obtain the converse assertion to Corollary~\ref{cor2.7}.

\begin{corollary}\label{cor3.7*}
Let $(G,w)$ be a nonempty weighted graph.
If, for $w$, there exists the subdominant pseudoultrametric, then $G$ is connected.
\end{corollary}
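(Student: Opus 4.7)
The plan is to argue by contrapositive: assuming $G$ is disconnected, I will exhibit a family of elements in $\mathfrak{F}_{w,u}$ whose values on a chosen pair of vertices are unbounded, forcing $(\mathfrak{F}_{w,u},\preceq)$ to have no greatest element. This is essentially the same mechanism that Theorem~\ref{th2.8} uses for $\mathfrak{U}_w$, but transported to the larger poset $\mathfrak{F}_{w,u}$, since we cannot assume here that $w$ is pseudoultrametrizable.

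First I would pick vertices $v_1$ and $v_2$ lying in distinct connected components of $G$. Then, for each positive constant $c$, I would define a function $\rho_c : V(G)\times V(G)\to\mathbb{R}^+$ by setting $\rho_c(x,y)=0$ whenever $x$ and $y$ lie in the same connected component of $G$, and $\rho_c(x,y)=c$ otherwise. Symmetry and $\rho_c(x,x)=0$ are immediate; the strong triangle inequality is a quick case analysis using the fact that ``same connected component'' is an equivalence relation, so that among the three pairs $(x,y),(x,z),(z,y)$, the number of ``cross-component'' pairs is $0$ or $2$, never $1$. Hence $\rho_c$ is a pseudoultrametric on $V(G)$.

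Next I would check that $\rho_c \in \mathfrak{F}_{w,u}$. This is the easiest step: every edge $\{u,v\}\in E(G)$ has both endpoints in the same connected component, so $\rho_c(u,v)=0\leq w(\{u,v\})$, which is exactly the inequality required by Definition~\ref{def2.3*}.

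Finally I would conclude. If a subdominant pseudoultrametric $\rho^{*}$ existed, then by definition $\rho_c\preceq \rho^{*}$ for every $c>0$, and in particular $\rho^{*}(v_1,v_2)\geq \rho_c(v_1,v_2)=c$. Letting $c\to\infty$ contradicts the fact that $\rho^{*}(v_1,v_2)$ is a finite nonnegative real number. Therefore the assumption that $G$ is disconnected is untenable, and $G$ must be connected. There is no real obstacle here; the only point requiring any thought is producing the family $\{\rho_c\}$, and the component-indicator pseudoultrametric is the natural choice because it is automatically zero on every edge regardless of the weight $w$.
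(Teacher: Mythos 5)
Your proof is correct, but it takes a more self-contained route than the paper does. The paper obtains this corollary from Theorem~\ref{th2.8}: in the disconnected case the proof of that theorem passes to the auxiliary connected graph $\tilde{G}$, joining the components by extra edges whose weights $c_i$ are arbitrary, and the arbitrariness of these constants produces pseudoultrametrics whose values between two fixed components are unbounded, so no greatest element can exist. You achieve the same unboundedness directly, without any auxiliary graph, by taking the component-indicator pseudoultrametrics $\rho_c$ (value $0$ inside a component, $c$ across components); the verification that each $\rho_c$ is a pseudoultrametric and lies in $\mathfrak{F}_{w,u}$ is immediate, since every edge has both ends in one component. Your argument has the advantage of being completely explicit and of making transparent the point recorded in Remark~\ref{rem3.10}, namely that no pseudoultrametrizability of $w$ is needed here, whereas the paper's route reuses the machinery already built for Theorem~\ref{th2.8} (where the analogous statement for $\mathfrak{U}_w$ does involve extendability). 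The underlying mechanism --- exhibiting a family in $\mathfrak{F}_{w,u}$ with unbounded values on a fixed cross-component pair of vertices --- is the same in both cases, so the difference is one of packaging rather than substance.
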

\begin{remark}\label{rem3.10}
In corollaries~\ref{cor2.7} and~\ref{cor3.7*}  we do not require the pseudoultrametrizability of $w$.
\end{remark}

In order to trace an analogy between  $\rho_w$ and $d_w$, denote by $\mathfrak{M}_w$ the family of the
 pseudometrics extending $w$.

As it was shown in~\cite{DMV} for connected $G$, the shortest-path pseudometric $d_w$ belongs to  $\mathfrak{M}_w$ for every
pseudometrizable weight $w$.
If we introduce a partial order $\preceq$ in $\mathfrak{M}_w$ as on the subset of
 $(\mathfrak{F},\preceq)$ (see~(\ref{eq2.02})), then the following analog of Theorem~\ref{th2.8} holds.

\begin{theorem}[\cite{DMV}]\label{th2.9}
Let $(G,w)$ be a nonempty weighted graph with the pseudometrizable weight $w$.
If $G$ is connected, then $d_w$ is the greatest element of the poset $(\mathfrak{M}_w,\preceq)$.
Conversely, if $(\mathfrak{M}_w,\preceq)$ has a greatest element, then the graph $G$ is connected.
\end{theorem}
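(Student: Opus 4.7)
The plan is to mirror the proof of Theorem~\ref{th2.8} step by step, with Proposition~\ref{prop2.6} playing the role of Theorem~\ref{st2.1}, the cycle condition~(\ref{eq1.2}) replacing~(\ref{eq2.6}), and the ordinary triangle inequality replacing the strong triangle inequality.

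For the forward direction, suppose $G$ is connected. From Proposition~\ref{prop2.6} we have $d_w \in \mathfrak{F}_{w,m}$, and since $\mathfrak{M}_w \subseteq \mathfrak{F}_{w,m}$, every $d \in \mathfrak{M}_w$ satisfies $d \preceq d_w$. It therefore remains to show that $d_w$ itself lies in $\mathfrak{M}_w$, i.e.\ that $d_w(u,v) = w(\{u,v\})$ for every edge $\{u,v\} \in E(G)$. The inequality $d_w(u,v) \leq w(\{u,v\})$ is given by Proposition~\ref{prop2.6}. For the reverse inequality I would invoke pseudometrizability: fix any pseudometric $d^{\ast}$ extending $w$ and, for an arbitrary path $P = (u = x_1, \ldots, x_n = v) \in \mathfrak{P}_{u,v}$, apply the triangle inequality to get
$$
w(\{u,v\}) \;=\; d^{\ast}(u,v) \;\leq\; \sum_{i=1}^{n-1} d^{\ast}(x_i, x_{i+1}) \;=\; \sum_{i=1}^{n-1} w(\{x_i, x_{i+1}\}).
$$
Taking the infimum over $P \in \mathfrak{P}_{u,v}$ yields $w(\{u,v\}) \leq d_w(u,v)$, completing the forward direction.

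For the converse, I would argue by contrapositive, imitating the second half of the proof of Theorem~\ref{th2.8}. Assume $G$ disconnected and choose $v_{i_0}$, $v_{i_1}$ in distinct connected components. Build $(\tilde{G}, \tilde{w})$ exactly as in the proof of Theorem~\ref{th2.4}, with free non-negative parameters $c_i$ on the new edges. The combinatorial argument reproduced there shows that every cycle of $\tilde{G}$ is a cycle of $G$, and therefore condition~(\ref{eq1.2}) for $\tilde{w}$ follows at once from condition~(\ref{eq1.2}) for $w$. Hence $\tilde{w}$ is pseudometrizable on the connected graph $\tilde{G}$, and by the forward direction just established, $d_{\tilde{w}} \in \mathfrak{M}_{\tilde{w}} \subseteq \mathfrak{M}_w$. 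Since $d_{\tilde{w}}(v_{i_0}, v_{i_1}) = \tilde{w}(\{v_{i_0}, v_{i_1}\}) = c_{i_1}$ and $c_{i_1}$ is arbitrary, we obtain
$$
\sup_{\rho \in \mathfrak{M}_w} \rho(v_{i_0}, v_{i_1}) = +\infty,
$$
which precludes the existence of a greatest element in $(\mathfrak{M}_w, \preceq)$.

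The only step requiring real care is the transfer of the cycle condition~(\ref{eq1.2}) from $w$ to $\tilde{w}$, which reduces to the observation that $\tilde{G}$ introduces no new cycles through the "hub" vertex $v_{i_0}$; but this is precisely the combinatorial argument already supplied in the proof of Theorem~\ref{th2.4}, so no new difficulty arises. Everything else is a direct transcription of the pseudoultrametric proof with $\max$ replaced by $\sum$.
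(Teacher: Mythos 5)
Your proposal is correct. Note that the paper does not actually prove this statement---Theorem~\ref{th2.9} is quoted from~\cite{DMV}---so there is no in-paper proof to match; what you have done is reconstruct it by transplanting the paper's pseudoultrametric machinery (Proposition~\ref{prop2.6} in place of Theorem~\ref{st2.1}, the proofs of Theorems~\ref{th2.4} and~\ref{th2.8} for the two directions), which is precisely the analogy the paper itself advertises, and every step checks out: the forward direction correctly combines $\mathfrak{M}_w\subseteq\mathfrak{F}_{w,m}$ with the extension identity $d_w(u,v)=w(\{u,v\})$ obtained from an auxiliary pseudometric $d^{*}\in\mathfrak{M}_w$, and the converse correctly reuses the hub construction $(\tilde G,\tilde w)$ and the inclusion $\mathfrak{M}_{\tilde w}\subseteq\mathfrak{M}_w$ to make $\sup_{\rho\in\mathfrak{M}_w}\rho(v_{i_0},v_{i_1})=+\infty$. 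The only step you leave tacit is that the hypothesis ``$w$ is pseudometrizable'' yields condition~(\ref{eq1.2}) for $w$ in the first place (you start the transfer argument from ``(\ref{eq1.2}) for $w$''); this is a one-line application of the ordinary triangle inequality around a cycle (the edge of maximal weight is dominated by the sum over the complementary path), and it is needed so that the cycle-preservation argument gives~(\ref{eq1.2}) for $\tilde w$ and hence, via the result of~\cite{DMV} cited in the introduction, the nonemptiness of $\mathfrak{M}_{\tilde w}$. With that sentence added, the proof is complete.
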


Theorems~\ref{th2.8} and \ref{th2.9} imply
\begin{corollary}\label{cor2.10}
Let $G$ be a nonempty graph. The following statements are equivalent:
\begin{itemize}
\item [(i)] $G$ is a connected graph;
\item [(ii)] The poset $(\mathfrak{M}_w,\preceq)$ has the greatest element for every pseudometrizable weight
$w:E(G)\rightarrow \mathbb{R}^+$;
\item [(iii)] The poset $(\mathfrak{U}_w,\preceq)$ has the greatest element for every pseudoultrametrizable weight
$w:E(G)\rightarrow \mathbb{R}^+$.
\end{itemize}

\end{corollary}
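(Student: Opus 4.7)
The plan is to prove the three-way equivalence by establishing $(i) \Rightarrow (ii)$, $(i) \Rightarrow (iii)$, $(ii) \Rightarrow (i)$ and $(iii) \Rightarrow (i)$, each as a one-line consequence of Theorem~\ref{th2.8} or Theorem~\ref{th2.9}. The substantive work has already been done in those two theorems, so the argument reduces to packaging their two halves together.

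For the forward implications $(i) \Rightarrow (ii)$ and $(i) \Rightarrow (iii)$: assuming $G$ is connected, I would invoke the first (direct) half of Theorem~\ref{th2.9} to conclude that $d_w$ is the greatest element of $(\mathfrak{M}_w,\preceq)$ for every pseudometrizable $w$, and the first half of Theorem~\ref{th2.8} to conclude that $\rho_w$ is the greatest element of $(\mathfrak{U}_w,\preceq)$ for every pseudoultrametrizable $w$.

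For the converse implications $(ii) \Rightarrow (i)$ and $(iii) \Rightarrow (i)$: the point is that at least one pseudometrizable (respectively pseudoultrametrizable) weight on $G$ always exists, for instance the identically zero weight $w \equiv 0$, which is trivially extended by the zero pseudometric (which is also a pseudoultrametric). Thus $(ii)$ (resp. $(iii)$) guarantees the existence of a greatest element for this particular weight, and the converse half of Theorem~\ref{th2.9} (resp. Theorem~\ref{th2.8}) immediately forces $G$ to be connected.

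There is no real obstacle here; the only thing to verify is that the class of pseudometrizable or pseudoultrametrizable weights on a given nonempty graph $G$ is nonempty, which is settled by exhibiting the zero weight. One could alternatively pick any constant weight $w \equiv c > 0$; by Theorem~\ref{th2.4} this is pseudoultrametrizable (every cycle trivially satisfies~(\ref{eq2.6})), and it is also pseudometrizable, so either choice works.
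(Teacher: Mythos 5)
Your proof is correct and follows essentially the same route as the paper, which simply derives the corollary by combining the direct and converse halves of Theorem~\ref{th2.8} and Theorem~\ref{th2.9}. Your extra observation that the class of pseudometrizable (pseudoultrametrizable) weights is nonempty --- via the zero weight or a constant weight, the latter justified by Theorem~\ref{th2.4} --- is a sensible completeness check that the paper leaves implicit, but it does not change the argument.
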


Using Corollary~\ref{cor2.6}, Theorem~\ref{th2.8}, and  the corresponding results from~\cite{DMV} we get
\begin{corollary}\label{cor2.11}
Let $G$ be a nonempty graph. The following statements are equivalent:
\begin{itemize}
\item [(i)] $G$ is a tree;
\item [(ii)] Every weight $w:E(G)\rightarrow \mathbb{R}^+$ is pseudometrizable and the poset $(\mathfrak{M}_w,\preceq)$ contains the
greatest element;
\item [(iii)] Every weight  $w:E(G)\rightarrow \mathbb{R}^+$ is pseudoultrametrizable and the poset $(\mathfrak{U}_w,\preceq)$
contains the greatest element.
\end{itemize}

\end{corollary}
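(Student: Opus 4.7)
The plan is to observe that a tree is by definition a connected forest, and that each of conditions (ii) and (iii) splits into two subconditions: universal pseudo(ultra)metrizability of all weights, and existence of a greatest element in the associated poset for every such weight. Thus each of the four implications (i)$\Rightarrow$(ii), (i)$\Rightarrow$(iii), (ii)$\Rightarrow$(i), (iii)$\Rightarrow$(i) reduces to an application of results already in hand.

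For (i)$\Rightarrow$(iii), I would argue that since $G$ is a forest, Corollary~\ref{cor2.6} makes every weight $w:E(G)\to\mathbb{R}^+$ pseudoultrametrizable; since $G$ is also connected, the direct half of Theorem~\ref{th2.8} produces the greatest element $\rho_w$ in $(\mathfrak{U}_w,\preceq)$ for each such $w$. The implication (i)$\Rightarrow$(ii) is identical in structure, with Corollary~\ref{cor2.6} replaced by the corresponding \cite{DMV} characterization of forests as graphs on which every weight is pseudometrizable, and Theorem~\ref{th2.8} replaced by Theorem~\ref{th2.9}.

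For the converse directions, I would argue as follows. In (iii)$\Rightarrow$(i), the universal pseudoultrametrizability hypothesis together with Corollary~\ref{cor2.6} yields that $G$ is a forest. To obtain connectedness, fix any concrete weight $w$ (for example the zero weight on $E(G)$); by hypothesis $(\mathfrak{U}_w,\preceq)$ has a greatest element, and the converse half of Theorem~\ref{th2.8} then forces $G$ to be connected. Being both a forest and connected, $G$ is a tree. The implication (ii)$\Rightarrow$(i) is entirely analogous, using the \cite{DMV} forest characterization in place of Corollary~\ref{cor2.6} and the converse half of Theorem~\ref{th2.9} in place of that of Theorem~\ref{th2.8}.

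There is no genuine obstacle here; the proof is a direct packaging of already established results. The only point to verify is that, in the converse directions, the "greatest element" hypothesis can legitimately be fed into Theorem~\ref{th2.8} or Theorem~\ref{th2.9}, which requires merely that some pseudo(ultra)metrizable weight exists on $G$ so that the associated poset is nonempty; this is automatic, for instance by taking the zero weight.
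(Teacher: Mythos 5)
Your proof is correct and follows essentially the same route the paper intends: the paper's one-line proof is exactly this packaging of Corollary~\ref{cor2.6}, Theorem~\ref{th2.8}, and the corresponding results of \cite{DMV} (Theorem~\ref{th2.9} and the forest characterization of universal pseudometrizability), with connectedness in the converse directions extracted from the greatest-element hypothesis applied to any single (e.g.\ zero) weight. Nothing further is needed.
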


Another examples illustrating the analogy between $\rho_w$ and $d_w$ are given in the next section.
\begin{remark}\label{rem3.12}
If the weight $w$ is pseudometrizable but not pseudoultrametrizable, then the following problem arises.
Find the extension of $w$ being as ``ultrametrizable'' as possible.

We can introduce an appropriate measure of ``ultrametrizability'' using the so-called ``betweenness exponent'' being the supremum
$\alpha \geq 1$ for which $d^{\alpha}$ remains to be a metric for a given metric $d$ (see \cite{DM}, \cite{DD}).

We can easily extend the notion of betweenness exponent to the case of weighted graphs.
If $\alpha =1$, then the betweenness exponent gives us the condition of pseudometrizability of the weight $w$
and,  if $\alpha=\infty$, the  condition of pseudoultrametrizability of $w$.
\end{remark}

\section{Ultrametrization of weighted graphs}

In the previous section it was proved that a weight $w:E(G)\rightarrow \mathbb{R}^+$ is pseudoultrametrizable if and only if
 condition~(\ref{eq2.6}) holds for every cycle $C \subseteq G$.
If a pseudoultrametrizable weight $w$ is strictly positive, i.e., for every $e\in E(G)$ we have
$$
w(e)>0,
$$
and $G$ is finite and connected, then it is clear that the subdominant pseudoultrametric $\rho_w$ is an ultrametric.
The following example shows that, for infinite $G$, the strict positivity of $w$ does not guarantee that $\rho_w$ is an ultrametric.
\begin{example}\label{ex3.1}
Let  $(G,w)$ be an infinite weighted graph, depicted in Figure~\ref{ris2}, where
$$
\varepsilon_n=w(\{u, s_n\})=w(\{s_n,t_n\})=w(\{t_n,v\})
$$
are positive real numbers such that
$\lim\limits_{n\rightarrow \infty}\varepsilon_n=0$ and $\varepsilon_n>\varepsilon_{n+1}$ for each $n$.
\begin{figure}[ht] 
\centering
\includegraphics[width=1\linewidth]{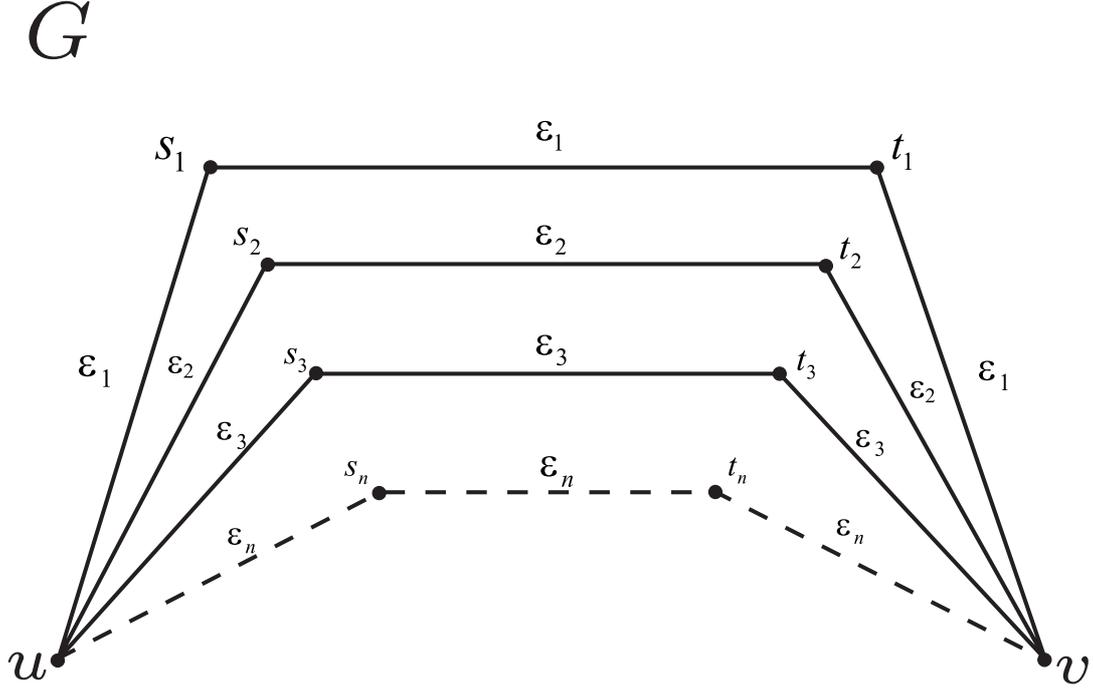}
\caption{The weighted graph with the pseudoultrametrizable weight $w$ such that $\mathfrak{U}_w$ does not contain ultrametrics.}
\label{ris2}
\end{figure}
The length of every cycle $C\subseteq G$ is equal to six and its vertices are $u, s_n, t_n, v, t_m, s_m$, where $m \neq n$.
Such cycle $C$ has the three distinct edges of maximal weight. Consequently,  $w$ is pseudoultrametrizable.
The definition of $\rho_w$ implies that
$$
\rho_w(u,v)=\max\{w(\{u,s_n\}),w(\{s_n,t_n\}),w(\{t_n,v\})\}=\varepsilon_n \vee \varepsilon_m.
$$
Letting $n,m \to \infty$, we obtain $\rho_w(u,v)=0$ .
\end{example}
From Theorem~\ref{th2.8} it follows, for connected $G$, that the set $\mathfrak{U}_w$ contains  ultrametrics if and only if
$\rho_w$ is an ultrametric.
In the present section we shall describe the structure of the graphs $G$ for which $\rho_w$ is an ultrametric for every strictly
 positive
pseudoultrametrizable weight $w$.

Note that  the paper~\cite{Le} contains the  complete characterization of metric spaces $(X,d)$ for which the
subdominant (for $d$) pseudoultrametric is an ultrametric.

We need the following lemma from\cite{DMV}.
\begin{lemma}\label{lemma3.2}
Let $G$ be a connected graph and let $u^*$, $v^*$ be two nonadjacent vertices of $G$.
Let $\tilde{F}=\{F_j\}_{j\in \mathbb{N}}$  be a sequence of paths connecting $u^*$ and $v^*$
and meeting the following condition:
\begin{itemize}
\item [($i_1$)] For every  $e^0 \in E(G)$ there exist $u^0\in e^0$ and $i=i(e^0)$ such that
$u^0 \notin  \bigcup\limits_{k=1}^{\infty} V(F_{i+k})$.
\end{itemize}
Then there exists a subsequence $\{F_{j_k}\}_{k\in \mathbb{N}}$ of the sequence $\tilde{F}$ such that:
\begin{itemize}
\item [($i_2$)] $E(F_{j_l})\cap E(F_{j_k})  = \varnothing$ for $l \neq k$;
\item [($i_3$)] If  $C$ is a cycle in the graph $\bigcup\limits_{k \in \mathbb{N}}F_{j_k}$ and
\begin{equation}\label{eq3.4}
k_0=k_0(C):=\min\{k\in \mathbb{N}:E(C)\cap E(F_{j_k})\neq \varnothing\},
\end{equation}
then $C$ and  $F_{j_{k_0}}$ have at least two common edges.
\end{itemize}
\end{lemma}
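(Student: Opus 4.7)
The plan is to construct the subsequence $\{F_{j_k}\}$ by induction on $k$, where at each stage the new path is chosen far enough along to satisfy the key hypothesis $(i_1)$ with respect to every edge of every previously chosen path. I would set $j_1 = 1$, and, given $j_1 < \ldots < j_n$, consider the finite set of edges $E_n = E(F_{j_1}) \cup \ldots \cup E(F_{j_n})$. For each $e^0 \in E_n$, hypothesis $(i_1)$ furnishes an endpoint $u^0(e^0) \in e^0$ and an index $i(e^0)$ such that $u^0(e^0) \notin V(F_j)$ whenever $j > i(e^0)$. Since $E_n$ is finite, I can pick $j_{n+1}$ to exceed both $j_n$ and $\max\{i(e^0) : e^0 \in E_n\}$.

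With this choice, property $(i_2)$ comes out automatically. Indeed, if $e \in E(F_{j_l})$ and $k > l$, then $e \in E_l \subseteq E_{k-1}$, so by construction one endpoint of $e$ is missing from $V(F_{j_k})$; hence $e \notin E(F_{j_k})$, proving edge-disjointness.

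For property $(i_3)$, I would argue by contradiction. Let $C$ be a cycle in $\bigcup_k F_{j_k}$ and let $k_0 = k_0(C)$ be defined by (\ref{eq3.4}). Suppose $C$ shares only one edge $e = \{a,b\}$ with $F_{j_{k_0}}$. Removing $e$ from $C$ leaves a path in $C$ from $a$ to $b$; by the minimality of $k_0$, none of its edges lie in $F_{j_k}$ for $k < k_0$, and by the edge-disjointness $(i_2)$ none of its edges lie in $F_{j_{k_0}}$ either. Thus this path lies in $\bigcup_{k > k_0} F_{j_k}$, and in particular both $a$ and $b$ belong to $\bigcup_{k > k_0} V(F_{j_k})$. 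But since $e \in E(F_{j_{k_0}}) \subseteq E_{k_0}$, the construction ensures $j_{k_0+1} > i(e)$, so one endpoint of $e$ is absent from every $V(F_{j_k})$ with $k > k_0$. This contradiction forces $C$ and $F_{j_{k_0}}$ to share at least two edges.

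The only delicate point is the bookkeeping: one must verify that the single inductive choice of $j_{n+1}$ is enough to guarantee $(i_2)$ and $(i_3)$ for \emph{all} later indices simultaneously. This is where the precise form of $(i_1)$ — that $u^0$ is absent from the \textbf{entire} tail $V(F_{i+1}), V(F_{i+2}),\ldots$ rather than from one particular $F_j$ — is essential, since it lets a single threshold $i(e^0)$ control all future paths at once.
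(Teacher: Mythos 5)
The paper itself gives no proof of this lemma --- it is imported verbatim from~\cite{DMV} (``We need the following lemma from~\cite{DMV}''), so there is no in-text argument to compare yours against. Judged on its own, your proof is correct and complete: choosing $j_{n+1}$ beyond $\max\{i(e^0):e^0\in E_n\}$ for the finite edge set $E_n=E(F_{j_1})\cup\dots\cup E(F_{j_n})$ does yield $(i_2)$, because an endpoint of any earlier edge is missing from all of $V(F_{j_k})$ with larger index; and your contradiction argument for $(i_3)$ is sound, since if $e=\{a,b\}$ were the unique common edge of $C$ and $F_{j_{k_0}}$, the remaining arc of $C$ would put both $a$ and $b$ into $\bigcup_{k>k_0}V(F_{j_k})$, while $e\in E_{k_0}$ and $j_k\geq j_{k_0+1}>i(e)$ force the endpoint $u^0(e)$ provided by $(i_1)$ to be absent from every such $V(F_{j_k})$. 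You also correctly identify why the full-tail formulation of $(i_1)$ is what makes a single threshold per edge suffice. One minor attribution slip: the fact that the edges of $C$ other than $e$ avoid $E(F_{j_{k_0}})$ is exactly your contradiction hypothesis (only one shared edge), together with the minimality of $k_0$ for smaller indices; it is not a consequence of $(i_2)$. This does not affect the validity of the argument.
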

\begin{remark}\label{rem4.2}
Here and below by the union  $\bigcup\limits_{i \in \mathcal{I}}G_i$ of subgraphs $G_i$ of the graph $G$ we shall always mean the
 subgraph $\tilde{G}\subseteq G$ for which 
$$
V(\tilde{G})=\bigcup\limits_{i \in \mathcal{I}}V(G_i) \mbox{ and } E(\tilde{G})=\bigcup\limits_{i \in \mathcal{I}}E(G_i).
$$
\end{remark}

The next theorem is the main result of the section.
\begin{theorem}\label{th3.3}
Let $G=(V,E)$ be a nonempty connected graph.
The following two statements are equivalent.
\begin{itemize}
\item [(i)] For every strictly positive pseudoultrametrizable weight $w$ the subdominant pseudoultrametric $\rho_w$ is an ultrametric.
\item [($ii$)] For every pair of distinct points  $u^*$, $v^* \in V(G)$  and for an arbitrary sequence $\tilde{F}$  of
paths $F_j \in \mathfrak{P}_{u^*, v^*}$, $j \in \mathbb{N}$ there exists an edge  $e^0=\{u^0, v^0\}\in E(G)$ such that
$$
u^0, v^0 \in  \bigcup\limits_{k=1}^{\infty} V(F_{i+k}) \mbox{ for every } i>0.
$$
\end{itemize}
\end{theorem}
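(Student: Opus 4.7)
My plan is to prove $(ii)\Rightarrow(i)$ directly and $(i)\Rightarrow(ii)$ by contrapositive, with Lemma~\ref{lemma3.2} doing the heavy lifting.

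For $(ii)\Rightarrow(i)$, I would argue by contradiction. Suppose $w$ is strictly positive and pseudoultrametrizable but $\rho_w$ fails to be an ultrametric. Then there are distinct $u^{*},v^{*}$ with $\rho_{w}(u^{*},v^{*})=0$, so by~(\ref{eq2.1}) one can pick paths $F_{j}\in\mathfrak{P}_{u^{*},v^{*}}$ with $\max_{e\in F_{j}}w(e)\to 0$. Applying $(ii)$ to the sequence $\tilde{F}=\{F_{j}\}$ yields an edge $e^{0}=\{u^{0},v^{0}\}$ with $u^{0},v^{0}\in\bigcup_{k=1}^{\infty}V(F_{i+k})$ for every $i>0$, so each of $u^{0},v^{0}$ lies in $V(F_{j})$ for arbitrarily large $j$. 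For every such $j$, the sub-path of $F_{j}$ joining $u^{0}$ and $u^{*}$ has maximum weight at most $\max_{e\in F_{j}}w(e)$, so $\rho_{w}(u^{0},u^{*})=0$; similarly $\rho_{w}(v^{0},v^{*})=0$. The strong triangle inequality applied twice then forces $\rho_{w}(u^{0},v^{0})=0$, but $\rho_{w}$ extends $w$ by Theorem~\ref{th2.4}, giving $w(e^{0})=0$ and contradicting strict positivity.

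For the contrapositive of $(i)\Rightarrow(ii)$, assume $\neg(ii)$: one can fix distinct $u^{*},v^{*}$ and a sequence $\tilde{F}=\{F_{j}\}$ in $\mathfrak{P}_{u^{*},v^{*}}$ which is exactly the hypothesis $(i_{1})$ of Lemma~\ref{lemma3.2}. Apply that lemma to obtain a subsequence $\{F_{j_{k}}\}$ with $(i_{2})$ and $(i_{3})$, set $\tilde{G}=\bigcup_{k}F_{j_{k}}$, and put $w_{\tilde{G}}(e)=\varepsilon_{k}:=2^{-k}$ for $e\in E(F_{j_{k}})$. Well-definedness follows from the edge-disjointness $(i_{2})$, and $(i_{3})$ together with strict monotonicity of $\{\varepsilon_{k}\}$ ensures that in every cycle of $\tilde{G}$ the maximum weight is attained at least twice; hence $w_{\tilde{G}}$ is pseudoultrametrizable on $\tilde{G}$, and Theorems~\ref{st2.1} and~\ref{th2.4} supply the subdominant pseudoultrametric $\rho_{\tilde{G}}$ on $V(\tilde{G})$ extending $w_{\tilde{G}}$. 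Extend $\rho_{\tilde{G}}$ to a function $\rho$ on $V(G)\times V(G)$ by $\rho(x,y)=\rho_{\tilde{G}}(x,y)$ when $x,y\in V(\tilde{G})$, $\rho(x,x)=0$, and $\rho(x,y)=1$ otherwise. Since $\rho_{\tilde{G}}\leq\varepsilon_{1}=1/2<1$, the strong triangle inequality for $\rho$ reduces to a short case-by-case verification. Defining $w(\{a,b\}):=\rho(a,b)$ for each edge makes $\rho$ a pseudoultrametric on $V(G)$ extending $w$, so $w$ is pseudoultrametrizable; and for every $e=\{a,b\}\in E(F_{j_{k}})$ one has $w(e)=\rho(a,b)=\rho_{\tilde{G}}(a,b)=w_{\tilde{G}}(e)=\varepsilon_{k}$, whence formula~(\ref{eq2.1}) gives $\rho_{w}(u^{*},v^{*})\leq\varepsilon_{k}\to 0$, so $\rho_{w}$ is not an ultrametric.

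The delicate point is strict positivity of this $w$. Given $e^{0}=\{u^{0},v^{0}\}\in E(G)$, condition $(i_{1})$ selects an endpoint, say $u^{0}$, and an index $K$ with $u^{0}\notin V(F_{j_{k}})$ for all $k\geq K$. If either endpoint lies outside $V(\tilde{G})$ then $\rho(u^{0},v^{0})=1>0$. Otherwise every edge of $\tilde{G}$ incident to $u^{0}$ belongs to some $F_{j_{k}}$ with $k<K$ and so has weight $\geq\varepsilon_{K-1}$, which forces any path in $\tilde{G}$ starting at $u^{0}$ to begin with an edge of weight $\geq\varepsilon_{K-1}$; taking the infimum over such paths yields $\rho_{\tilde{G}}(u^{0},v^{0})\geq\varepsilon_{K-1}>0$. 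This is the crux of the argument: realizing $\rho_{w}(u^{*},v^{*})=0$ while preserving strict positivity of $w$ on the whole of $E(G)$ — not merely on $E(\tilde{G})$ — is precisely what compels us to use the full strength of $(i_{1})$ rather than just the output $(i_{2})$--$(i_{3})$ of Lemma~\ref{lemma3.2}.
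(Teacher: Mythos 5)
Your proposal is correct and follows essentially the same route as the paper: the implication (ii)$\Rightarrow$(i) is the paper's contradiction argument (paths with $\max_{e\in F_j}w(e)\to 0$, the edge $e^0$ supplied by (ii), and $w(e^0)=\rho_w(u^0,v^0)$ via Theorem~\ref{th2.4}), and the implication (i)$\Rightarrow$(ii) uses the same skeleton — Lemma~\ref{lemma3.2}, the edge-disjoint paths $F_{j_k}$ carrying decreasing weights $\varepsilon_k$, and strict positivity extracted from condition ($i_1$) exactly as in the paper. Your only deviations are local streamlinings that work equally well: you reach $\rho_w(u^0,v^0)=0$ by two applications of the strong triangle inequality through $u^*,v^*$ instead of a path inside the tail-union graph, and you obtain the globally defined weight by restricting a directly constructed pseudoultrametric (the gluing of $\rho_{\tilde G}$ with the constant $1$), which makes pseudoultrametrizability automatic and replaces the paper's cycle-by-cycle verification of the extended weight $w_2$.
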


\begin{proof}
$\textbf{(i)}\Rightarrow \textbf{(ii)}$
Suppose that statement (ii) does not hold.
Then there exist a pair of distinct vertices $u^*$, $v^*$ and a sequence $\tilde{F}$ of paths $F_j\in \mathfrak{P}_{u^*,v^*}$
such that for every edge $e^0 \in E(G)$, $e^0=\{u^0, v^0\}$, there exist $i\in \mathbb{N}$ and at least one vertex incident
to $e^0$, for example $u^0$, for which
\begin{equation*}
u^0 \notin  \bigcup\limits_{k=1}^{\infty} V(F_{i+k}).
\end{equation*}
Let us show that, in this case, statement (i) does not hold.
By Lemma~\ref{lemma3.2}, without loss of generality, it can be assumed that
\begin{equation}\label{eq3.7}
E(F_i)\cap E(F_j)  = \varnothing \mbox{ for } i\neq j,
\end{equation}
and every cycle $C$ from  $\bigcup\limits_{j\in \mathbb{N}}F_j$ has at least two common edges with $F_{k_0}$, where $k_0$ is
defined as in~(\ref{eq3.4}).
Consider the graph
$$
\tilde{G}=\bigcup\limits_{i \in \mathbb{N}}F_{i}.
$$
Let $\{\varepsilon_i\}_{i\in \mathbb{N}}$ be a strictly decreasing sequence of positive real numbers with
$\lim\limits_{i \rightarrow \infty}\varepsilon_i=0$.
Define the weight $w_1:E(\tilde{G})\rightarrow \mathbb{R}^+$ as $w_1(e):=\varepsilon_i$  if $e \in E(F_i)$.
The definition is correct by virtue of~(\ref{eq3.7}).
All  edges of the path $F_i$, $i \in \mathbb{N}$ have the same weight $\varepsilon_i$ and every edge of the path $F_{i+1}$ has a weight strictly
 less than  $\varepsilon_i$.
Every cycle $C\subseteq \tilde{G}$ has at least two common edges $e_1$, $e_2$ with the path $F_{k_0}$,
where $k_0$ is defined by~(\ref{eq3.4}).
Consequently the weights of these edges are maximal,  $w_1(e_1)=w_1(e_2)=\varepsilon_{k_0}=\max\limits_{e\in E(C)}w_1(e)$.
By Theorem~\ref{th2.4}, the weight $w_1$ is pseudoultrametrizable.
Choosing the pseudoultrametrization $\rho_{w_1}$ as in~(\ref{eq2.1}) we get
\begin{equation}\label{eq4*}
\rho_{w_1}(u^*,v^*)=\inf\limits_{i\in \mathbb{N}}(\max\limits_{e\in F_i}w(e))=
\inf\limits_{i\in \mathbb{N}}\varepsilon_i=0.
\end{equation}
Thus $\rho_{w_1}$ is not metric on the set $V(\tilde{G})$.

Using the pseudoultrametric $\rho_{w_1}$ we extend the weighted function $w_1$ to the set of edges of $G$ having
the vertices in $V(\tilde{G})$.
Let us prove that we obtain again a strictly positive weight (for which we keep the same notation $w_1$).
Let $e_0=\{u_0,v_0\}\in E(G)$, $u_0\in V(\tilde{G})$ and $v_0\in V(\tilde{G})$.
By assumption there is at least one end of the edge $e^0$, for example $u^0$, and there exists an index  $i_0$ such that
\begin{equation}\label{eq3.6.1}
u^0 \notin V(F_i)
\end{equation}
if $i>i_0$.
Let $F$ be a path in $\tilde{G}$ connecting $u^0$ and $v^0$ and let $e \in E(F)$ be an edge incident with $u^0$.
From~(\ref{eq3.6.1}) it follows that
$$
e \in \bigcup\limits_{i=1}^{i_0}E(F_i).
$$
Since the sequence $\{\varepsilon_i\}_{i \in \mathbb{N}}$ is decreasing, we get $w(e)\geq  \varepsilon_{i_0}$,
 so that $\max\limits_{e\in F}w(e) \geq \varepsilon_{i_0}>0$.
Thus
$$
w_1(\{u_0,v_0\})=\rho_{w_1}(u^0,v^0)>0.
$$
The next step of our proof is to assign some positive weights for edges $e=\{u,v\}\in E(G)$ having $u \notin V(\tilde{G})$ or
$v \notin V(\tilde{G})$.
Assign for every such edge $e=\{u,v\}$ the weight $w_2(e)=M$, where $M$ is an arbitrary number from $[\varepsilon_1,\infty)$.
Moreover if $e=\{u,v\}$ with $u, v \in V(\tilde{G})$, then set $w_2(e):=\rho_{w_1}(u,v)$.
It is clear that so defined weight $w_2$ is pseudoultrametrizable and $w_2(e)>0$ for every $e\in E(G)$.
Indeed, if $C$ is a cycle in $G$ having all the edges in $\tilde{G}$, then there are two maximum weight edges
in $C$ because  $\rho_{w_1}$ is a pseudoultrametric.
Now let $v \in V(C)$ but $v \notin V(\tilde{G})$. 
Then two edges of the cycle $C$ which are incident with $v$ have the maximal weight $M$.
The pseudoultrametrizability of $w_2$ follows from Theorem~\ref{th2.4}.
Since $\tilde{G}\subseteq G$ and $w_1=w_2|_{E(\tilde{G})}$, from~(\ref{eq2.1}) we obtain
\begin{equation}\label{eq4**}
\rho_{w_2}(u,v)\leq \rho_{w_1}(u,v)
\end{equation}
for $u, v \in V(\tilde{G})$.
Relations~(\ref{eq4*}) and~(\ref{eq4**}) imply $\rho_{w_2}(u^*,v^*)=0$.
Thus we have found the strictly positive pseudoultrametrizable weight $w_2$ for which the subdominant pseudoultrametric
 $\rho_{w_2}$ is not an ultrametric.

$\textbf{(ii)}\Rightarrow \textbf{(i)}$
Let $(G,w)$ be a weighted graph with a pseudoultrametrizable strictly positive weight and let condition (ii) hold.
Let us prove that the pseudoultrametric $\rho_w: V(G)\times V(G)\to \mathbb{R}^+$ is an ultrametric.

Suppose the contrary. Then, for some vertices $u^*$ and $v^*$, $u^*\neq v^*$, we have $\rho_w(u^*,v^*)=0$.
Consequently there exists a sequence $\{F_{k}\}_{k\in \mathbb{N}}$, $F_k\in \mathfrak{P}_{u^*,v^*}$, such that for every $\varepsilon>0$ there exists
$k(\varepsilon) \in \mathbb{N}$ meeting the inequality
\begin{equation}\label{eq3.10*}
\max\limits_{e\in F_{k}}w(e)<\varepsilon
\end{equation}
for  $k \geq k(\varepsilon)$.
By assumption (ii) there exists an edge $e^0=\{u^0,v^0\}\in E(G)$, such that $u^0, v^0 \in \bigcup\limits_{i=1}^{\infty} V(F_{i+k})$
 for all $k>0$.

Let us choose a path $P$ connecting $u^0$ and $v^0$ in the graph $G_{\varepsilon}:=\bigcup\limits_{i=1}^{\infty}F_{k(\varepsilon)+i}$.
The definition of $G_{\varepsilon}$ and~(\ref{eq3.10*}) imply the inequality
$$
w(e)<\varepsilon
$$
for every $e \in P$.
This inequality and the definition of $\rho_w$ give us
$$
\rho_w(u^0,v^0)\leq \max\limits_{e\in P}w(e)<\varepsilon.
$$
Letting $\varepsilon$ to zero, we obtain $\rho_w(u^0,v^0)=0$. Since $w$ is strictly positive and pseudoultrametrizable, by
Theorem~\ref{th2.4} we have
$$
0<w(\{u^0,v^0\})=\rho_w(u^0,v^0).
$$
This contradiction completes the proof.
\end{proof}

This theorem and the corresponding result from~\cite{DMV} imply
\begin{corollary}\label{cor3.4}
Let $G$ be a nonempty connected graph.
The following two statements are equivalent:
\begin{itemize}
\item [(i)]  The subdominant pseudoultrametric $\rho_w$ is  an ultrametric
              for every strictly positive pseudoultrametrizable weight $w:E(G)\rightarrow \mathbb{R}^+$;
\item [(ii)] The shortest-path pseudometric $d_w$ is a metric
              for every strictly positive pseudometrizable weight $w:E(G)\rightarrow \mathbb{R}^+$.
\end{itemize}
\end{corollary}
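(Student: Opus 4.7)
The plan is to show that both statements (i) and (ii) of Corollary~\ref{cor3.4} are equivalent to the same purely graph-theoretic condition on $G$, namely condition (ii) of Theorem~\ref{th3.3}: for every pair of distinct vertices $u^*,v^*\in V(G)$ and every sequence $\{F_j\}_{j\in\mathbb{N}}\subseteq\mathfrak{P}_{u^*,v^*}$ there exists an edge $e^0=\{u^0,v^0\}\in E(G)$ with $u^0,v^0\in\bigcup_{k=1}^\infty V(F_{i+k})$ for every $i>0$. Call this property $(\ast)$. Theorem~\ref{th3.3} has already established the equivalence of (i) with $(\ast)$, so only the analogous equivalence for the shortest-path pseudometric needs to be produced.

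For this second equivalence I would invoke the corresponding characterization from~\cite{DMV}, which describes exactly those connected graphs $G$ for which $d_w$ is a metric whenever the strictly positive weight $w$ is pseudometrizable: the characterization is again $(\ast)$. This parallel is natural, because Lemma~\ref{lemma3.2} — itself drawn from~\cite{DMV} — is the purely combinatorial engine driving both directions of Theorem~\ref{th3.3}, and the very same engine handles the metric analogue. Chaining $(\mathrm{i})\Leftrightarrow(\ast)\Leftrightarrow(\mathrm{ii})$ then yields the corollary.

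For a self-contained alternative, one would imitate the proof of Theorem~\ref{th3.3} verbatim, replacing the pseudoultrametric construction by its pseudometric counterpart. Concretely, if $(\ast)$ fails, apply Lemma~\ref{lemma3.2} to extract edge-disjoint paths $\{F_{j_k}\}$, and assign $w_1(e)=\varepsilon_k$ to each $e\in E(F_{j_k})$, where $\{\varepsilon_k\}$ decays fast enough that $|E(F_{j_k})|\cdot\varepsilon_k\to 0$ (rather than merely $\varepsilon_k\to 0$, as in the ultrametric setting); this forces $d_{w_1}(u^*,v^*)=0$. The extension to the rest of $E(G)$ is then handled exactly as in the proof of Theorem~\ref{th3.3}, with the pseudoultrametric $\rho_{w_1}$ replaced by the pseudometric $d_{w_1}$. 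The converse implication follows the pattern of~(\ref{eq3.10*}) with the path-sum replacing the path-maximum.

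The main obstacle, apart from relying on the parallel result of~\cite{DMV}, is the careful calibration of $\{\varepsilon_k\}$ in the metric setting: one must simultaneously drive path sums to zero and keep the extended weight pseudometrizable and strictly positive throughout $G$. Because this calibration is already implicit in the~\cite{DMV} theorem, the cleanest route to Corollary~\ref{cor3.4} is the two-line appeal to the shared characterization $(\ast)$.
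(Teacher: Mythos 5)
Your main route is exactly the paper's: the authors deduce Corollary~\ref{cor3.4} by combining Theorem~\ref{th3.3} (which characterizes statement (i) by the structural condition on sequences of paths) with the corresponding characterization of statement (ii) from~\cite{DMV}, i.e., the same chaining through the shared graph-theoretic property that you call $(\ast)$. The proposal is correct and takes essentially the same approach as the paper.
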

Using theorems~\ref{th2.8} and~\ref{th3.3} it is simple to describe the structural properties of  graphs $G$ for which there
 exist  strictly positive pseudoultrametrizable weights $w$ such that $\mathfrak{U}_w$ does not contain any ultrametric.
Recall some definitions.

For a sequence of the sets $A_n$, $n \in \mathbb{N}$,  the \textit{upper limit},
 $\lim\sup_{n\rightarrow \infty} A_n$,
 is the set of elements $a$ such that $a \in A_n$ for infinitely many $n$, i. e.,
$$
\limsup\limits_{n\rightarrow \infty}A_n = \bigcap\limits_{k=1}^{\infty}\left(\bigcup \limits_{n=1}^{\infty}A_{n+k}\right).
$$

The subset $V_0$ of the vertices set of a graph $G$ is called \textit{independent}, if every two vertices from $V_0$ are not adjacent.

\begin{corollary}\label{cor3.2}
Let $G=(V,E)$ be a nonempty connected graph.
The following two statements are equivalent:
\begin{itemize}
\item [(i)] There exists a strictly positive pseudoultrametrizable weight $w$ such that the set $\mathfrak{U}_w$ does not contain any
ultrametric;
\item [(ii)] There exist some vertices $u^*$, $v^* \in V(G)$ and a sequence $\{F_j\}_{j\in \mathbb{N}}$,
 $F_j \in \mathfrak{P}_{u^*,v^*}$, such that
$$
\limsup\limits_{j \rightarrow \infty} V(F_j)
$$
is an independent set.
\end{itemize}
\end{corollary}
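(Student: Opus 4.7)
The plan is to reduce the statement to Theorem~\ref{th3.3} by two observations: first, that (i) is equivalent to the existence of a strictly positive pseudoultrametrizable weight $w$ for which the subdominant pseudoultrametric $\rho_w$ fails to be an ultrametric; second, that the independence condition in (ii) is precisely the logical negation of condition (ii) of Theorem~\ref{th3.3}.

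For the first reduction, I would invoke Theorem~\ref{th2.8}: since $G$ is connected, $\rho_w$ is the greatest element of $(\mathfrak{U}_w,\preceq)$ whenever $w$ is pseudoultrametrizable, so $\rho_w\in\mathfrak{U}_w$. If some ultrametric $d$ belongs to $\mathfrak{U}_w$, then $d\preceq\rho_w$, hence $\rho_w(x,y)\geq d(x,y)>0$ for all distinct $x,y$, so $\rho_w$ is itself an ultrametric; conversely, if $\rho_w$ is an ultrametric, then it is an ultrametric in $\mathfrak{U}_w$. Thus $\mathfrak{U}_w$ contains an ultrametric if and only if $\rho_w$ is an ultrametric, and statement (i) becomes: there exists a strictly positive pseudoultrametrizable $w$ with $\rho_w$ not an ultrametric, i.e., the negation of statement (i) of Theorem~\ref{th3.3}.

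For the second reduction, I would unwind the definition of $\limsup V(F_j)=\bigcap_{i=1}^{\infty}\bigcup_{k=1}^{\infty}V(F_{i+k})$. A vertex $x$ lies in $\limsup V(F_j)$ precisely when $x\in V(F_{i+k})$ for some $k$ no matter how large $i$ is chosen, i.e., when $x\in\bigcup_{k=1}^{\infty}V(F_{i+k})$ for every $i>0$. Hence the set $\limsup V(F_j)$ is independent exactly when, for every edge $e^0=\{u^0,v^0\}\in E(G)$, at least one of $u^0,v^0$ fails this property, i.e., there is some $i>0$ with $u^0\notin\bigcup_{k=1}^{\infty}V(F_{i+k})$ or $v^0\notin\bigcup_{k=1}^{\infty}V(F_{i+k})$. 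This is word-for-word the negation of condition (ii) of Theorem~\ref{th3.3}.

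Combining these two equivalences with Theorem~\ref{th3.3} (contrapositive form) yields (i)$\Leftrightarrow$(ii). The argument is essentially bookkeeping of quantifiers; the only place that requires real care is the first reduction, where one must check that an ultrametric extending $w$ forces $\rho_w$ itself to be an ultrametric, and this is where Theorem~\ref{th2.8} is used. No separate constructions are needed beyond those already carried out in the proofs of Theorems~\ref{th2.8} and~\ref{th3.3}.
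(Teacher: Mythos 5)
Your proposal is correct and follows exactly the route the paper intends: the equivalence ``$\mathfrak{U}_w$ contains an ultrametric iff $\rho_w$ is an ultrametric'' via Theorem~\ref{th2.8} (stated explicitly in the paper just after Example~\ref{ex3.1}), combined with the observation that the independence of $\limsup_j V(F_j)$ is precisely the negation of condition (ii) of Theorem~\ref{th3.3}. The quantifier bookkeeping is handled correctly, so nothing is missing.
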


Now we shall give some examples of graphs $G$ for which every strictly positive pseudoultrametrizable weight $w:E(G)\to\mathbb{R}^+$
can be extended to an ultrametric.
\begin{example}\label{ex4.7}
If every connected component of nonempty graph $G=(V,E)$ contains at most one vertex of the infinite degree,
then for each strictly positive pseudoultrametrizable weight $w:E(G)\rightarrow \mathbb{R}^+$ there exists
an ultrametric $\rho \in \mathfrak{U}_w$.

Indeed, let $\{G_i:i \in \mathcal{I}\}$ be the set of connected components of $G$.
We complete the graph $G$, if necessary, to the connected graph $\tilde{G}$ as it was done in the proof of Theorem~\ref{th2.4}.
If distinct vertices $u^*$ and $v^*$ lie in the same component  $G_i$, then one of them, for example $u^*$, is incident to
the finite number of edges $e_1, e_2,....,e_n$.
Thus, for an arbitrary infinite sequence of paths $F_i \in \mathfrak{P}_{u^*,v^*}$, one from $e_j$, $j=1,...,n$ belongs to the
 infinite number of paths.

If an arbitrary vertices  $u^*$ and $v^*$ lie in the different connected components, then there exists an edge of the form
$\{v_{i_0},v_i\}$ which belongs to every path  $F_i \in \mathfrak{P}_{u^*,v^*}$.
In both cases, applying Theorem~\ref{th3.3}, we get the existence of an ultrametric $\rho \in \mathfrak{U}_w$.
\end{example}

\begin{example}\label{ex4.8}
If the nonempty graph $G$ is a tree, then the subdominant pseudoultrametric $\rho_w$ is an ultrametric for each strictly
 positive weight  $w:E(G)\rightarrow \mathbb{R}^+$.
Indeed, the well-known characteristic property of trees says that for every two distinct vertices  $u^*, v^* \in V(G)$  there exists
only  one path connected them in $G$.
Consequently, every sequence  $\tilde{F}$ of paths $F_j \in \mathfrak{P}_{u^*,v^*}$ is stationary, $F_1=F_2=...=F_n=F_{n+1}=...$.
This leads to the automatic truth of assertion (ii) from Theorem~\ref{th3.3}.
\end{example}

\section{The least element of $\mathfrak{U}_w$  and uniqueness of pseudoultrametric extension of weight}

In the present section we shall show that only the complete k-partite graphs have the following property: 
the poset $(\mathfrak{U}_w,\preceq)$ contains the least element 
for every pseudoultrametrizable weight $w:E(G)\rightarrow \mathbb{R}^+$.
Moreover,  a uniqueness criterion for the problem of the extension of a weight $w$ to a pseudoultrametric
$\rho:V(G)\times V(G)\rightarrow \mathbb{R}^+$ will be  given also.
The criterion of existence of such extensions was obtained above in Theorem~\ref{th2.4}.

Recall that a graph $G$ is called {\it k-partite}, if the set $V(G)$ can be decomposed into $k$
($k$ is an arbitrary cardinal number)  nonempty disjoint sets $V_{\alpha}$,
$$
V(G)=\bigcup\limits_{\alpha \in I} V_{\alpha},\quad \alpha \in \mathcal{I},\quad |\mathcal{I}|=
k,\quad V_{\alpha_i}\cap V_{\alpha_j}=\varnothing\mbox{ if } i\neq j,
$$
such that for every $\{x,y\}\in E(G)$ the vertices $x$ and $y$ lie in distinct parts $V_{\alpha}$.
 A $k$-partite graph is \textit{complete} if every two vertices in distinct parts are adjacent.
 It is clear that a $k$-partite graph is empty, if $k=1$ and  connected, if  $k \geq 2$.

\begin{figure}[ht] 
\centering
\includegraphics[width=0.35\linewidth]{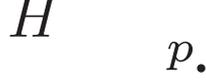}
\caption{If we define a weight $w$ on $E(H)$ such that $w(\{u,v\})>0$, then $(\mathfrak{U}_w,\preceq)$ does not contain the least
 element.}
\label{ris3}
\end{figure}

The proof of the following lemma can be found in  \cite{DDP}.
\begin{lemma}\label{lem5.1}
Let $G$ be a graph with $V(G)\neq \varnothing$.
Then $G$ is a complete k-partite with  $k\geq 1$ if and only if $G$ does not contain any
    induced subgraphs isomorphic to the graph $H$ depicted by Figure~\ref{ris3}.
\end{lemma}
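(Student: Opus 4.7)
The plan is to prove the classical characterization of complete multipartite graphs by an avoided-induced-subgraph condition, where $H$ is read off from Figure~\ref{ris3} as the three-vertex graph on $\{u,v,w\}$ whose only edge is $\{u,v\}$ (i.e., an edge together with an isolated vertex). The core idea is to associate with $G$ the binary relation ``non-adjacency or equality,'' and show that the forbidden induced subgraph is precisely the obstruction to its transitivity.

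For the implication $(\Rightarrow)$, I would argue by contradiction. Suppose $G$ is complete $k$-partite with parts $\{V_\alpha\}_{\alpha \in \mathcal{I}}$ and that it contains an induced $H$ on vertices $u,v,w$ with $\{u,v\} \in E(G)$ and $\{u,w\}, \{v,w\} \notin E(G)$. Since $u$ and $v$ are adjacent, they lie in distinct parts $V_{\alpha_1}, V_{\alpha_2}$. Since any two vertices from distinct parts are adjacent (by completeness) and $w$ is non-adjacent to $u$, the vertex $w$ must lie in $V_{\alpha_1}$; symmetrically $w \in V_{\alpha_2}$. Disjointness of the parts forces $\alpha_1 = \alpha_2$, contradicting $\{u,v\} \in E(G)$.

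For the implication $(\Leftarrow)$, define $x \sim y$ on $V(G)$ by $x = y$ or $\{x,y\} \notin E(G)$. Reflexivity and symmetry are immediate. The key step, where the hypothesis is used, is transitivity: if $x \sim y$ and $y \sim z$ with $x,y,z$ pairwise distinct, then $\{x,y\}$ and $\{y,z\}$ are non-edges, and if $\{x,z\}$ were an edge, the triple $\{x,y,z\}$ would induce a copy of $H$ (with $\{x,z\}$ the unique edge and $y$ isolated), contradicting the assumption. Hence $\sim$ is an equivalence relation; its equivalence classes form nonempty disjoint sets $V_\alpha$ whose union is $V(G)$. By the very definition of $\sim$, two vertices in distinct classes are adjacent, and two vertices in the same class are not adjacent, so $G$ is a complete $k$-partite graph with $k = |\mathcal{I}| \geq 1$.

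I do not expect any serious obstacle. The one point worth attention is the boundary case $k = 1$: this corresponds to a single equivalence class, that is, $E(G) = \varnothing$, which is consistent with the authors' convention that a complete $1$-partite graph is an empty graph on $V(G)$. Everything else is a routine equivalence-relation argument, and no computation is involved.
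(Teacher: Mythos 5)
Your proof is correct and complete: your reading of $H$ (from Figure~\ref{ris3}) as an edge $\{u,v\}$ together with a third vertex non-adjacent to both matches exactly how the lemma is invoked in the proof of Theorem~\ref{th5.2}, the forward direction correctly uses that non-adjacency in a complete multipartite graph forces membership in a common part, and in the converse the absence of an induced $H$ is precisely the transitivity of your relation ``equal or non-adjacent,'' whose classes give the required parts (with $k\geq 1$ since $V(G)\neq\varnothing$, the case $k=1$ being the empty graph). Note that the paper itself gives no proof and only cites \cite{DDP}; your equivalence-relation argument --- equivalently, the observation that the complement of $G$ must be a disjoint union of cliques --- is the standard route to this characterization, so nothing further is needed.
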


We shall denote by $\mathrm{TM}$ (twice-max) the set of  unordered pairs $p, q$ of distinct
    nonadjacent vertices of the graph $(G,w)$ having the following property:
\textbf{each} path $P\in \mathfrak{P}_{p,q}$ contains at least two distinct edges
$e_1$ and $e_2$ such that $w(e_1)=w(e_2)=\max\limits_{e \in E(P)}w(e)$.

\begin{theorem}\label{th5.2}
The following conditions are equivalent for every nonempty graph $G$:
\begin{itemize}
\item [(i)] The poset $(\mathfrak{U}_w,\preceq)$ contains the least pseudoultrametric $\rho_{0,w}$ for 
            every pseudoultrametrizable weight $w:E(G)\rightarrow \mathbb{R}^+$, i.e., the inequality
                \begin{equation}\label{eq21.1}
                \rho_{0,w}(u,v)\leq \rho(u,v)
                \end{equation}
            holds for every $\rho \in \mathfrak{U}_w$ and all $u,v \in V(G)$;
\item [(ii)] $G$  is a complete k-partite graph with $k \geq 2$.
\end{itemize}
If condition (ii) holds and $w$ is  a pseudoultrametrizable weight, then for $u\neq v$ we have
\begin{equation}\label{eq21.2}
\rho_{0,w}(u,v)=\left\{
\begin{array}{l}
0 \mbox{ if   } \{u,v\} \in \mathrm{TM}  \\\
\max\limits_{e\in E(F)}w(e) \mbox{ if } \{u,v\} \notin \mathrm{TM} \\
\end{array}
\right.
\end{equation}
where $F$ is an arbitrary path from $\mathfrak{P}_{u,v}$ for which $\max\limits_{e\in E(F)}w(e)$ is achieved on a single edge.
\end{theorem}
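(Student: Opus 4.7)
The proof establishes both implications while verifying formula~(\ref{eq21.2}) along the way.

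For $(ii)\Rightarrow(i)$, given complete $k$-partite $G$ with $k\geq 2$ and pseudoultrametrizable $w$, I would adopt formula~(\ref{eq21.2}) as the definition of $\rho_{0,w}$ and show it is the least element of $(\mathfrak{U}_w,\preceq)$. First I would check that the $\notin\mathrm{TM}$ branch is unambiguous: if two paths $F_1,F_2\in\mathfrak{P}_{u,v}$ each attain their maximum weight at a single edge, with $\max_{F_1}w>\max_{F_2}w$, the graph $F_1\cup F_2$ contains a cycle $C$ through the unique maximum edge $e_1^\ast$ of $F_1$, since removing $e_1^\ast$ keeps its endpoints connected via the remaining arcs of $F_1$ together with $F_2$; every other edge of $C$ has weight strictly below $\max_{F_1}w$, contradicting Theorem~\ref{th2.4}. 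For an edge $\{u,v\}$ the trivial one-edge path yields $\rho_{0,w}(u,v)=w(\{u,v\})$, so $\rho_{0,w}$ extends $w$. For the lower-bound inequality, the case $\{u,v\}\in\mathrm{TM}$ is immediate; for $\{u,v\}\notin\mathrm{TM}$ with witness path $F$ and unique maximum edge $e^\ast=\{p,q\}$ of weight $M$, I would apply the strong triangle inequality for any $\rho\in\mathfrak{U}_w$ along the chain $p\to u\to v\to q$, using that the two sub-paths of $F\setminus\{e^\ast\}$ carry $\rho$-weights strictly below $M$, to conclude $\rho(u,v)\geq M$.

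To show $\rho_{0,w}$ itself satisfies the strong triangle inequality I would do a case analysis on the $\mathrm{TM}$-membership of $\{x,y\},\{y,z\},\{x,z\}$. The complete $k$-partite hypothesis enters crucially here: non-adjacency is an equivalence relation on $V(G)$, so if $\{x,y\}$ and $\{y,z\}$ are both non-adjacent then so is $\{x,z\}$, and any vertex in another part provides a length-two bypass whose two edges must carry equal weight by the $\mathrm{TM}$ condition. A cycle argument via Theorem~\ref{th2.4} then yields the transitivity $\{x,y\},\{y,z\}\in\mathrm{TM}\Rightarrow\{x,z\}\in\mathrm{TM}$, after which the remaining subcases reduce to the equality $\rho(u,v)=M$ on non-$\mathrm{TM}$ pairs established above.

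For $(i)\Rightarrow(ii)$, I would argue by contraposition using Lemma~\ref{lem5.1}. If $G$ is not complete $k$-partite with $k\geq 2$, the lemma supplies an induced copy of $H$ on vertices $u,v,a\in V(G)$ with $\{u,v\}\in E(G)$ and $\{u,a\},\{v,a\}\notin E(G)$. Take the constant weight $w\equiv 1$ on $E(G)$, which is pseudoultrametrizable since every cycle has all of its edges of (maximum) weight $1$. Define $\rho_1,\rho_2:V(G)\times V(G)\to\mathbb{R}^+$ by $\rho_i(x,y)=1$ for $x\neq y$, except $\rho_1(u,a)=0$ and $\rho_2(v,a)=0$; a direct strong-triangle check confirms that both are pseudoultrametrics extending $w$. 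Any common lower bound $\rho\in\mathfrak{U}_w$ would satisfy $\rho(u,a)=\rho(v,a)=0$, hence $\rho(u,v)\leq\max\{\rho(u,a),\rho(a,v)\}=0$, contradicting $\rho(u,v)=w(\{u,v\})=1$; so $(\mathfrak{U}_w,\preceq)$ has no least element.

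The main obstacle is the strong triangle inequality for $\rho_{0,w}$ in the forward direction: the piecewise formula~(\ref{eq21.2}) forces a careful case split on the $\mathrm{TM}$-status of all three pairs, and the cleanest route passes through the transitivity-of-$\mathrm{TM}$ lemma, whose proof itself combines the complete $k$-partite structure with Theorem~\ref{th2.4}.
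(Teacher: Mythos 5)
Your architecture is essentially the paper's, and most steps are sound: the implication (i)$\Rightarrow$(ii) with the constant weight $1$ and the two pseudoultrametrics vanishing on $\{u,a\}$, resp.\ $\{v,a\}$, is exactly the argument in the paper; your well-definedness check (a cycle inside $F_1\cup F_2$ through the unique maximal edge of $F_1$, contradicting Theorem~\ref{th2.4}) and your chain argument giving $\rho(u,v)\geq M$ for every $\rho\in\mathfrak{U}_w$ on non-$\mathrm{TM}$ pairs are correct variants of what the paper does via Lemma~\ref{lem2.2} applied in the pseudoultrametric space $(V(G),\rho)$.

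The genuine gap is in the strong triangle inequality for $\rho_{0,w}$. Your split covers: all three pairs outside $\mathrm{TM}$ (reduce to a fixed $\rho\in\mathfrak{U}_w$, since \eqref{eq21.2} agrees with $\rho$ there), the pair $\{x,y\}$ in $\mathrm{TM}$ (trivial), and both $\{x,z\},\{z,y\}\in\mathrm{TM}$ (your transitivity lemma). But the subcase with exactly one of $\{x,z\},\{z,y\}$ in $\mathrm{TM}$ and $\{x,y\}\notin\mathrm{TM}$ does \emph{not} ``reduce to the equality $\rho(u,v)=M$'': say $\{x,z\}\in\mathrm{TM}$; the fixed extension only yields $\rho_{0,w}(x,y)=\rho(x,y)\leq\max\{\rho(x,z),\rho(z,y)\}$, and the value $\rho(x,z)$ on a $\mathrm{TM}$ pair is not determined by $w$, while $\rho_{0,w}(x,z)=0$; the only a priori bound on it, $\rho(x,z)\leq\max\{\rho(x,y),\rho(z,y)\}$, reintroduces $\rho(x,y)$, the very quantity you are trying to bound, so the step is circular. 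What is needed here is the same structural work as in your transitivity case: from a path $F\in\mathfrak{P}_{x,y}$ whose maximum $a$ is attained at a single edge, reroute through $z$ (using that suitable vertices of $F$ are adjacent to $z$ by the complete multipartite structure, and splitting on whether $z\in V(F)$ and on $w(\{u_0,z\})$ versus $a$) to produce a path in $\mathfrak{P}_{z,y}$ or $\mathfrak{P}_{x,z}$ with a single maximal edge of weight $\geq a$; this is precisely the uniform case analysis the paper performs for pairwise non-adjacent $x,y,z$, and it subsumes your transitivity lemma. (Alternatively one can first prove that every $\mathrm{TM}$ pair admits an extension vanishing on it, as in the construction used for Theorem~\ref{th5.6}, and then choose $\rho$ accordingly.) A related minor imprecision: for the transitivity lemma itself the contradiction cannot literally be a cycle of $G$ fed into Theorem~\ref{th2.4}, because $\mathrm{TM}$ pairs are non-adjacent and cannot be closed into cycles of $G$; after your bypass equalities the contradiction should be a path with a single maximal edge violating the $\mathrm{TM}$ property (or Lemma~\ref{lem2.2} applied in the pseudoultrametric space, where such ``edges'' do exist).
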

\begin{proof}
$\textbf{(i)}\Rightarrow\textbf{(ii)}$  Suppose (ii) does not hold.
Then, by Lemma~\ref{lem5.1}, there exist vertices $u,v$, $\{u,v\}\in E(G)$, and a vertex $p \in V(G)$, $u\neq p\neq v$
such that  $\{u,p\}\notin E(G)$ and  $\{v,p\}\notin E(G)$.
Define the weight $w(e)=1$ for every $e \in E(G)$.
Consider the following two pseudoultrametrics on the set $V(G)$:

$\rho_1(u,p)=\rho_1(p,u)=\rho_1(s,s) =0$  for all $s\in V(G)$ and $\rho_1(s,t)=1$ in the opposite case;

$\rho_2(v,p)=\rho_2(p,v)=\rho_2(s,s) =0$ for all $s\in V(G)$ and $\rho_2(s,t)=1$ in the opposite case.

It is clear that $\rho_1,\rho_2 \in \mathfrak{U}_w$.
Assuming that there exists the least pseudoultrametric $\rho \in \mathfrak{U}_w$,  we get the contradiction
$$
1=\rho(u,v)\leq \rho(u,p)+\rho(p,v)\leq (\rho_{1} \wedge \rho_{2})(u,p)+(\rho_{1} \wedge \rho_{2})(p,v)=0.
$$
The implication (i)$\Rightarrow$(ii) follows.

$\textbf{(ii)}\Rightarrow\textbf{(i)}$  Let condition (ii) hold and $(G,w)$ be an arbitrary
weighted graph with a pseudoultrametrizable $w$.
Since $k \geq 2$,  $G$ is a connected graph, as it was mentioned above.
Let us show that $\rho_{0,w}$ defined by~(\ref{eq21.2}) is the least element of the poset $\mathfrak{U}_w$.

Note that the function $\rho_{0,w}$ is well defined.
Indeed, suppose that there exist $\{u,v\}\notin \mathrm{TM}$ and two distinct paths
$F_1, F_2 \in \mathfrak{P}_{u,v}$ such that each of them  contains only one edge of maximal weight.
Let $\rho \in \mathfrak{U}_w$.
Consider in the pseudoultrametric space $(V(G),\rho)$ the cycle generated by the path $F_1$ and the edge $\{u,v\}$.
By Lemma~\ref{lem2.2} we obtain that $\rho(u,v)=\max\limits_{e\in E(F_1)}w(e)$.
Similarly if we consider the cycle generated by $F_2$ and $\{u,v\}$, then 
$\rho(u,v)= \max\limits_{e\in E(F_2)}w(e)$.

At the same time for every edge $\{u,v\} \in E(G)$ we have $\rho_{0,w}(u,v)=w(\{u,v\})$,
since in this case $\{u,v\}\notin \mathrm{TM}$ and the path $(u,v)$ is one from the paths connecting the vertices $u$ and $v$.

Let us prove that the function $\rho_{0,w}$  is  actually a pseudoultrametric.
It is sufficient to establish the strong triangle inequality
\begin{equation}\label{eq5.3*}
\rho_{0,w}(x,y)\leq \rho_{0,w}(x,z)\vee \rho_{0,w}(z,y)
\end{equation}
for pairwise distinct vertices $x,y,z \in V(G)$.

If all three points $x, y, z$ are pairwise adjacent, then~(\ref{eq5.3*}) follows from the pseudoultrametrizability of $w$.
Let us show that~(\ref{eq5.3*}) holds if among the vertices there are only two adjacent pairs.
If the weights of corresponding edges are distinct, then,
according to~(\ref{eq21.2}), the weight of the missing edge will be equal to the maximum weight of those edges.
The inequality~(\ref{eq5.3*}) holds again.
Assume that only two pairs of vertices are adjacent  and $w(\{x,z\})=w(\{z,y\})=\rho_{0,w}(x,z)=\rho_{0,w}(z,y)=a$
but $\rho_{0,w}(x,y)=b>a$.
Then there exists a path $F\in \mathfrak{P}_{x,y}$ with a unique edge $e_0$ having the maximal weight.
Consider the cycle consisting of the path $F$ and the edges $\{x,z\}$ and $\{z,y\}$
(or one of these edges, depending on whether the path $F$ contains one of them).
This cycle contains the unique edge $e_0$ of maximal weight, contrary to pseudoultrametrizability of $w$
(see Theorem~\ref{th2.4}).

By Lemma~\ref{lem5.1} the case when exactly two vertices are adjacent is impossible.
Therefore we may suppose that the vertices $x, y, z$ are pairwise non-adjacent.
Let $\rho_{0,w}(x,y)=a>0$ (if $\rho_{0,w}(x,y)=0$, then inequality~(\ref{eq5.3*}) is evident).
Since $a>0$, there exists a path $F \in \mathfrak{P}_{x,y}$ with the unique edge $e_0=\{u_0,v_0\}$ of maximal weight $w(e_0)=a$.
Suppose first that the path $F$ does not pass through the point $z$.
From (ii) it follows that at least one from the points $u_0, v_0$, for example $u_0$, is adjacent with $z$.
Consider the following two paths: $F_1 \in \mathfrak{P}_{x,z}$ consisting of the edge  $\{u_0,z\}$ and some part of the path $F$
and the path $F_2 \in \mathfrak{P}_{y,z}$ also consisting of the edge $\{u_0,z\}$ and the rest of $F$.
We may assume, without  loss of generality, that $\{u_0,v_0\}\in E(F_2)$.
If $w(\{u_0,z\})>a$, then $\{u_0,z\}$ is the unique edge of maximal weight for both $F_1, F_2$.
Hence, from~(\ref{eq21.2}) we have
$$
\rho_{0,w}(x,z)=w(\{u_0,z\})=\rho_{0,w}(z,y)>a,
$$
so that,~(\ref{eq5.3*}) follows.
Let $w(\{u_0,z_0\})\leqslant a$.
If the last inequality is strict, then $\{u_0,v_0\}$ is the unique edge of maximal weight in $F_2$. Consequently
$$
\rho_{0,w}(y,z)=w(\{u_0,v_0\})=a,
$$
so that~(\ref{eq5.3*}) holds.
If $w(\{u_0,z_0\})=a$, then $\{u_0,z_0\}$ is the unique edge of maximal weight in $F_1$ and again we obtain~(\ref{eq5.3*}).
Consider the case when the path $F$ passes through the point $z$.  By  splitting $F$ into the two paths $F_1\in \mathfrak{P}_{x,z}$
and $F_2\in \mathfrak{P}_{z,y}$, we obtain that one of the values $\rho_{0,w}(x,z)$, $\rho_{0,w}(z,y)$ is equal to $a$.
Thus,~(\ref{eq5.3*}) follows again.

It remains to prove that inequality~(\ref{eq21.1}) holds for every  $\rho \in \mathfrak{U}_w$ and $u,v \in V(G)$.
This is trivial, if $\rho_{0,w}(u,v)=0$.
Assume that $\rho_{0,w}(u,v)=a>0$, then $\{u,v\}\notin \mathrm{TM}$ and there exists a path $F \in \mathfrak{P}_{u,v}$ with
the unique edge $e_0$ of maximal weight $w(e_0)=a$.
Suppose that $\rho(u,v)<a$.
Then the function $\rho(x,y)$ is not a pseudoultrametric because 
the cycle $C$ with $V(C)=V(P)$ and $E(C)=E(P)\cup\{\{u,v\}\}$ contains
exactly one edge of maximal weight.
\end{proof}

As an application of Theorem~\ref{th5.2} we shall obtain a characterization of the stars.
Recall that a {\it star} is a complete bipartite graph for  which at least one of the parts is a singleton.
\begin{corollary}\label{cor6*}
The following conditions are equivalent:
\begin{itemize}
\item [(i)] Every weight $w:E(G)\rightarrow \mathbb{R}^+$ is pseudoultrametrizable and the poset
            $(\mathfrak{U}_w,\preceq)$ contains the least element;
\item [(ii)] $G$ is a star.
\end{itemize}
\end{corollary}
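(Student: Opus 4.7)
The plan is to view this corollary as a direct synthesis of Corollary~\ref{cor2.6} and Theorem~\ref{th5.2}, which exactly decouple the two ingredients appearing in condition~(i). By Corollary~\ref{cor2.6}, every weight $w:E(G)\to\mathbb{R}^+$ is pseudoultrametrizable if and only if $G$ is a forest, whereas by Theorem~\ref{th5.2}, the poset $(\mathfrak{U}_w,\preceq)$ contains the least element for every pseudoultrametrizable weight if and only if $G$ is complete $k$-partite with $k\geq 2$. Thus condition~(i) is equivalent to the conjunction: $G$ is a forest and $G$ is complete $k$-partite with $k\geq 2$.

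For the implication (ii)$\Rightarrow$(i), a star is a tree (so a forest) and is complete bipartite (hence $k=2$-partite), so both ingredients are immediate from the two results above.

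For (i)$\Rightarrow$(ii), I need to verify that a forest which is complete $k$-partite with $k\geq 2$ is necessarily a star. Since $k\geq 2$, every two vertices lying in distinct parts are adjacent, so $G$ is connected, and a connected forest is a tree. If $k\geq 3$, choosing one vertex in each of three distinct parts produces a triangle, contradicting the tree property. Hence $k=2$ and $G=K_{A,B}$ for some nonempty sets $A,B\subseteq V(G)$. If both $|A|\geq 2$ and $|B|\geq 2$, picking $a_1,a_2\in A$ and $b_1,b_2\in B$ yields the $4$-cycle $(a_1,b_1,a_2,b_2)$, again contradicting tree-ness. So $\min(|A|,|B|)=1$, i.e.\ $G$ is a star.

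No step here is subtle: the content of the corollary is precisely the intersection of the characterizations provided by Corollary~\ref{cor2.6} and Theorem~\ref{th5.2}. The only mild point is that in the definition of complete $k$-partite graph the parameter $k$ may be an arbitrary cardinal, but the obstructions used above (a triangle if $k\geq 3$, a $4$-cycle if both parts have at least two vertices) involve only finitely many vertices, so cardinality plays no role.
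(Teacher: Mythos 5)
Your proof is correct and follows essentially the same route as the paper: combine Theorem~\ref{th5.2} with the pseudoultrametrizability characterization (you cite Corollary~\ref{cor2.6}, the paper cites Corollary~\ref{cor2.11}, which amounts to the same thing since complete $k$-partite with $k\geq 2$ forces connectedness), and then rule out $k\geq 3$ via a triangle and two parts of size $\geq 2$ via a $4$-cycle, exactly as in the paper.
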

\begin{proof}
The implication (ii)~$\Rightarrow$~(i) follows from Theorem~\ref{th5.2} and Corollary~\ref{cor2.11}.
Let (i) hold, then again using Theorem~\ref{th5.2} and Corollary~\ref{cor2.11} we obtain
that $G$  is a complete k-partite graph with $k \geq 2$ being at the same time a tree.
If $k \geq 3$, then $G$ contains a triangle. To construct it, we may take
three points lying in three distinct parts of the graph $G$.
Since the trees do not contain  cycles, $k=2$.
If each part of $G$ contains at least two points, then it is easy to construct a
 quadruple (4-cycle) $C \subseteq G$, contrary again to the acyclic property of $G$.
Thus, $G$ is a complete  bipartite graph one part of which is a singleton.
\end{proof}

We turn now to the conditions of uniqueness in the problem of extension of a weight $w$ by pseudoultrametrics.
We need the following
\begin{definition}\label{def5.4}
Let $(G,w)$ be a nonempty weighted graph and let $u$,  $v$ be two distinct disjoint vertices of $G$.
We shall say that $u$ and $v$ are \textit{well chained} if for every $\varepsilon >0$ there exists a path $u=u_1,u_2,....,u_n=v$ such that
$\{u_i,u_{i+1}\}\in E(G)$ and $w(\{u_i,u_{i+1}\})\leq \varepsilon$ for $i=1,...,n-1$.
\end{definition}
We denote the set of all such pairs $\{u, v\}$ by $\mathrm{WCh}$.

\begin{remark}\label{rem5.5}
 The notion ``well chained points'' is often  used in the metric continuum theory \cite[с. 60]{SBNJ} and it plays an important
 role in the considering of problems related to the connectivity in metric spaces (see, e.g., \cite{AGOF}).
\end{remark}
\begin{remark}\label{rem5.55}
The notion ``well chained points'' arises naturally in the study of subdominant ultrametrics (see \cite{RTV} and \cite{Le}).
In particular, it is easy to show that, for a strictly positive pseudoultrametrizable weight $w$ and a connected graph $G$,
some vertices $u$ and $v$, $u\neq v$ are well chained if and only if $\rho_w(u,v)=0$.
\end{remark}

\begin{theorem}\label{th5.6}
Let $(G,w)$ be a nonempty connected weighted graph with a pseudoultrametrizable weight $w$.
The set $\mathfrak{U}_w$ contains only one element if and only if
\begin{equation}\label{eq5.4}
\mathrm{TM}  \subseteq \mathrm{WCh}.
\end{equation}
\end{theorem}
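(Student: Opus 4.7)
The plan rests on Theorem~\ref{th2.8}, which makes $\rho_w$ the greatest element of $(\mathfrak{U}_w,\preceq)$, so $|\mathfrak{U}_w|=1$ is equivalent to the identity $\rho(u,v)=\rho_w(u,v)$ for every $\rho\in\mathfrak{U}_w$ and every pair of distinct vertices $u,v\in V(G)$. Both directions therefore reduce to comparing these two values, and the two key tools are Lemma~\ref{lem2.2} (for the sufficiency) and Theorem~\ref{th2.4} (for the necessity).

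For sufficiency, assume $\mathrm{TM}\subseteq\mathrm{WCh}$ and fix $\rho\in\mathfrak{U}_w$; I would verify $\rho(u,v)=\rho_w(u,v)$ by distinguishing three cases. If $\{u,v\}\in E(G)$ the equality is immediate since both pseudoultrametrics extend $w$. If $\{u,v\}\notin E(G)$ and $\{u,v\}\notin\mathrm{TM}$, there is a path $F\in\mathfrak{P}_{u,v}$ whose maximum weight $M$ is attained at exactly one edge; inside the pseudoultrametric space $(V(G),\rho)$ the graph-theoretic cycle with vertex set $V(F)$ and edge set $E(F)\cup\{\{u,v\}\}$ carries weights that agree with $w$ on $E(F)$, so Lemma~\ref{lem2.2} forbids both $\rho(u,v)>M$ (otherwise $\{u,v\}$ would be the unique maximum) and $\rho(u,v)<M$ (otherwise the unique $F$-edge of weight $M$ would be the unique maximum), leaving $\rho(u,v)=M$; the same argument applied to $\rho_w$ gives $\rho_w(u,v)=M$. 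Finally, if $\{u,v\}\in\mathrm{TM}\subseteq\mathrm{WCh}$, then for every $\varepsilon>0$ there is a path $u=u_1,\ldots,u_n=v$ in $G$ with $w(\{u_i,u_{i+1}\})\leq\varepsilon$; iterating the strong triangle inequality and using $\rho(u_i,u_{i+1})=w(\{u_i,u_{i+1}\})$ gives $\rho(u,v)\leq\varepsilon$, and analogously $\rho_w(u,v)\leq\varepsilon$, so both values are $0$.

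For necessity I would argue by contraposition: assume there is a pair $\{u,v\}\in\mathrm{TM}\setminus\mathrm{WCh}$ and exhibit an element of $\mathfrak{U}_w$ distinct from $\rho_w$. Since $\{u,v\}\notin\mathrm{WCh}$, $r:=\rho_w(u,v)>0$; fix any $s\in[0,r)$ and form the augmented graph $\tilde G=(V(G),E(G)\cup\{\{u,v\}\})$ with weight $\tilde w$ equal to $w$ on $E(G)$ and to $s$ on $\{u,v\}$. The main verification is that $\tilde w$ satisfies condition~\eqref{eq2.6} on every cycle of $\tilde G$: a cycle not using $\{u,v\}$ is a cycle of $G$ and inherits the condition from $w$, while a cycle using $\{u,v\}$ is obtained by adjoining $\{u,v\}$ to some path $P\in\mathfrak{P}_{u,v}$, and since $\{u,v\}\in\mathrm{TM}$ the path $P$ already has two edges of maximum $w$-weight $M\geq r>s$, which remain two edges of maximum $\tilde w$-weight in the enlarged cycle. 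Theorem~\ref{th2.4} then yields a pseudoultrametric $\rho_{\tilde w}$ on $V(\tilde G)=V(G)$ extending $\tilde w$; it extends $w$ as well, so $\rho_{\tilde w}\in\mathfrak{U}_w$, but $\rho_{\tilde w}(u,v)=s<r=\rho_w(u,v)$ shows $\rho_{\tilde w}\neq\rho_w$, contradicting uniqueness.

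The main subtlety I anticipate lies in the necessity argument, where one has to recognise that the two conditions $\{u,v\}\in\mathrm{TM}$ and $\{u,v\}\notin\mathrm{WCh}$ play complementary roles: $\{u,v\}\notin\mathrm{WCh}$ provides the positive quantity $r=\rho_w(u,v)>0$ needed to have room to choose $s<r$, while $\{u,v\}\in\mathrm{TM}$ guarantees that every new cycle through $\{u,v\}$ in $\tilde G$ retains two edges of maximum weight regardless of $s$, so that condition~\eqref{eq2.6} survives the augmentation. Once this matching is recognised, the proof is a routine application of Theorems~\ref{th2.8} and~\ref{th2.4} together with Lemma~\ref{lem2.2}.
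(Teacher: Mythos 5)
Your proof is correct and follows essentially the same route as the paper: sufficiency via the dichotomy $\{u,v\}\notin\mathrm{TM}$ (forcing $\rho(u,v)$ to equal the unique path-maximum by Lemma~\ref{lem2.2}) versus $\{u,v\}\in\mathrm{TM}\subseteq\mathrm{WCh}$ (forcing $\rho(u,v)=0$), and necessity by adjoining the edge $\{u_0,v_0\}$ with a weight below $\rho_w(u_0,v_0)$ and invoking Theorem~\ref{th2.4}. The only cosmetic differences are that the paper derives $\rho(u,v)=0$ in the well-chained case via $\rho\preceq\rho_w$ (Theorem~\ref{th2.8}) rather than iterating the strong triangle inequality, and in the necessity part it builds two extensions with two distinct small weights $\varepsilon_1\neq\varepsilon_2$ instead of comparing one new extension with $\rho_w$; both variants are valid.
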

\begin{proof}
Let~\eqref{eq5.4} hold. In order to  prove the uniqueness of extension of $w$ it is sufficient to establish
the equality
\begin{equation}\label{eq5.5}
\rho(u,v)=\rho_w(u,v)
\end{equation}
for every pair of distinct nonadjacent vertices $u$, $v$ and every $\rho \in \mathfrak{U}_w$.
Let $u$, $v$ be distinct nonadjacent vertices for which $\{u,v\} \notin \mathrm{TM} $.
In this case, arguing as in the verification of correctness of the definition ~\eqref{eq21.2},
we see that the value $\rho(u,v)$ does not depend on the choice of $\rho \in \mathfrak{U}_w$.
Hence, ~\eqref{eq5.5} holds.
Consider now the case  when $\{u,v\}\in \mathrm{TM}$.
In view of inclusion~\eqref{eq5.4} the vertices are well chained.
This and the definition of the subdominant pseudoultrametric $\rho_w$ imply
\begin{equation}\label{eq5.6}
\rho_w(u,v)=0.
\end{equation}
Since $G$ is a connected graph, by Theorem~\ref{th2.8} we obtain $\rho(u,v) \leq \rho_w(u,v)$.
Moreover, $0\leq \rho(u,v)$ for $u,v \in V(G)$.
The past two inequalities and~\eqref{eq5.6} imply~\eqref{eq5.5}.

Let~\eqref{eq5.4} be false.
Let us prove the nonuniqueness of the extensions of $w$.

Let $\{u_0, v_0\}\in \mathrm{TM} \backslash \mathrm{WCh}$.
We have $\rho_w(u_0,v_0)>0$ because $\{u_0,v_0\}\notin \mathrm{WCh}$,  so that there exists $\varepsilon_0>0$ for which
\begin{equation}\label{eq5.7}
\max\limits_{e \in E(P)}w(e)\geq \varepsilon_0
\end{equation}
for every $P \in \mathfrak{P}_{u,v}$.
Let $\varepsilon_1$  be $\varepsilon_2$ two distinct numbers  from $[0,\varepsilon_0)$.
Consider the graph $\tilde{G}$ with
$$
V(\tilde{G})=V(G) \mbox{ и } E(\tilde{G})=E(G)\cup \{\{u_0,v_0\}\},
$$
i.e., $\tilde{G}$ can be obtained from $G$ by adding the edge $\{u_0,v_0\}$.
We define on $\tilde{G}$ the weights $w_i:E(\tilde{G})\rightarrow \mathbb{R}^+$, $i=1,2$ by the rule
\begin{equation*}
w_i(e)=\left\{
\begin{array}{l}
w(e) \mbox{ if } e \in E(G)\\\
\varepsilon_i \mbox{ if } e=\{u_0,v_0\}.\\
\end{array}
\right.
\end{equation*}
Let us verify that the weight $w_1$ is pseudoultrametrizable.
In accordance with Theorem~\ref{th2.4} it is sufficient to show that for every cycle
 $C\subseteq \tilde{G}$ there are  two distinct edges $e_1$, $e_2 \in E(C)$ satisfying
\begin{equation}\label{eq5.8}
\max\limits_{e \in E(C)}w_1(e)=w_1(e_1)=w_1(e_2).
\end{equation}
Let $C\subseteq \tilde{G}$. If $\{u_0,v_0\} \notin E(C)$, then the existence of such $e_1$, $e_2$ follows from the pseudoultrametrizability of $w$.
Suppose that $\{u_0,v_0\} \in E(C)$.
We can get a path $P \in \mathfrak{P}_{u_0,v_0}$ by removing the edge $\{u_0,v_0\}$ from the cycle $C$.
Since $\{u_0,v_0\} \in \mathrm{TM} $, there are two distinct edges $e_1, e_2 \in E(P)$ such that
$$
\max\limits_{e \in E(P)}w_1(e)=\max\limits_{e \in E(P)}w(e)=w(e_1)=w(e_2)=w_1(e_1)=w_1(e_2).
$$
In virtue of inequalities~\eqref{eq5.7} and $\varepsilon_1<\varepsilon_0$ we see that~\eqref{eq5.8} holds for these edges.
Consequently, the weight $w_1:E(\tilde{G})\rightarrow \mathbb{R}^+$ is pseudoultrametrizable.
In the same way we can verify that $w_2:E(\tilde{G})\rightarrow \mathbb{R}^+$ is also a pseudoultrametrizable weight.
Let $\rho_1$ and $\rho_2$ be  two pseudoultrametrics on $V(\tilde{G})=V(G)$ extending $w_1$ and, respectively, $w_2$.
Then it is clear that $\rho_1$ and $\rho_2$ extend also the weight $w$.
Since $\varepsilon_1 \neq \varepsilon_2$, we have $w_1 \neq w_2$. Thus $\rho_1 \neq \rho_2$.
Consequently the extension of the  weight $w$ to pseudoultrametrics  is not unique if  condition~\eqref{eq5.4} is violated.
\end{proof}

\begin{remark}\label{rem5.7}
If $G$ is a nonempty disconnected graph, then, for every pseudoultrametrizable weight $w$, the inequality
$$
\mathrm{card} (\mathfrak{U}_w)\geq \mathfrak{c}
$$
holds where $\mathfrak{c}$, as usual, is the cardinality of continuum.
The last inequality immediately follows from~\eqref{eq2.8}.
\end{remark}
\begin{example}\label{ex5.8}
For complete k-partite graphs with $k \geq 2$ the uniqueness of extension of $w$ is equivalent to the equality
\begin{equation}\label{eq5.9}
\rho_w=\rho_{0,w},
\end{equation}
where $\rho_w$ is the subdominant  pseudoultrametric, and $\rho_{0,w}$ is  defined by equality~\eqref{eq21.2}.
Indeed, every complete k-partite graph is connected for $k\geqslant 2$.
In correspondence with theorems~\ref{th2.4}  and~\ref{th5.2} the pseudoultrametric $\rho_w$ is the greatest element of
$(\mathfrak{U}_w,\preceq)$ and $\rho_{0,w}$ is the least element of this poset.
Consequently, for every $\rho \in \mathfrak{U}_w$, we have
$$
\rho_{0,w}\preceq \rho \preceq \rho_w,
$$
and by~(\ref{eq5.9})
$$
\rho_{0,w}= \rho = \rho_w.
$$
Let us show also that equality~\eqref{eq5.9} is equivalent to inclusion~(\ref{eq5.4}).
Note that in the proof of Theorem~\ref{th5.2} the equality
\begin{equation}\label{eq5.10}
\rho_{0,w}(u,v)=\rho_{w}(u,v)
\end{equation}
was established for $\rho_{0,w}(u,v)>0$.
Moreover, \eqref{eq5.10} must be hold for adjacent $u$ and $v$ because $\rho_{0,w},\rho_{w}\in \mathfrak{U}_w$.
Directly from the definitions we have
\begin{equation}\label{eq5.11}
\mathrm{WCh} = \{\{u,v\}:u  \mbox{ and } v \mbox{ are non-adjacent, } u \neq v \mbox{ and } \rho_w(u,v)=0\},
\end{equation}
and, for complete k-partite graphs with $k\geq 2$,
\begin{equation*}
\mathrm{TM} = \{\{u,v\}:u  \mbox{ and } v \mbox{ are non-adjacent, } u \neq v \mbox{ and } \rho_{0,w}(u,v)=0\}.
\end{equation*}
Thus,~\eqref{eq5.9} is equivalent to the equality $\mathrm{WCh} =\mathrm{TM} $.
By $\rho_{0,w}\preceq \rho_w$, we have the inclusion $\mathrm{TM}  \supseteq\mathrm{WCh} $.
Consequently ~\eqref{eq5.9} is equivalent to the converse inclusion $\mathrm{TM}  \subseteq \mathrm{WCh} $.
\end{example}
\begin{example}\label{ex5.9}
Let $G$ be a tree and $w:E(G)\rightarrow \mathbb{R}^+$ be a strictly positive weight.
The weight $w$ has the unique extension if and only if $\mathrm{TM}  = \varnothing$.

Indeed, every two vertices of the tree $G$ are connected by the unique path.
From this uniqueness, the strict positiveness of $w$, formula~\eqref{eq5.11} and the definition of  $\rho_w$  it follows
that $\mathrm{WCh}  =\varnothing$.
By Theorem~\ref{th5.6}, the last equality implies that the uniqueness of extension of $w$  is equivalent to
 $\mathrm{TM}=\varnothing$.
\end{example}

\begin{remark}\label{rem5.10}
The equality $\mathrm{TM}  =\varnothing$ is equivalent to the fact that the problem of extending of strictly positive
pseudoultrametrizable weight $w$ to a pseudoultrametric has the unique solution for every $G$ satisfying condition (ii) of Theorem~\ref{th3.3}.
For trees this equality is equivalent to the following statement.

If $e_1$ and $e_2$ are two distinct edges of the tree $G$ such  that $w(e_1)=w(e_2)$, then for every path $P\subseteq G$
including $e_1$ and $e_2$ there exists an edge $e_3 \in E(P)$ such that $w(e_3)>w(e_1)$.
\end{remark}

\newpage

{\bf Oleksiy Dovgoshey}

Institute of Applied Mathematics and Mechanics of NASU, R. Luxemburg str. 74, Donetsk 83114, Ukraine

{\bf E-mail: } aleksdov@mail.ru
\bigskip

{\bf Evgeniy Petrov}

Institute of Applied Mathematics and Mechanics of NASU, R. Luxemburg str. 74, Donetsk 83114, Ukraine

{\bf E-mail: } eugeniy.petrov@gmail.com

\end{document}